\newcommand{\tU}{\widetilde{U}}
\newcommand{\tV}{\widetilde{V}}
\newcommand{\tW}{\widetilde{W}}
\newcommand{\bU}{\mathbf{U}}
\newcommand{\bV}{\mathbf{V}}
\newcommand{\bG}{\mathbf{G}}
\newcommand{\tbU}{\widetilde{\bU}}
\newcommand{\tbV}{\widetilde{\bV}}
\newcommand{\p}{\partial}
\newcommand{\cS}{\mathcal{S}}
\newcommand{\cT}{\mathcal{T}}
\newcommand{\cM}{\mathcal{M}}
\newcommand{\bfR}{\mathbf{R}}
\newcommand{\del}{\delta}
\newcommand{\Ome}{\Omega}
\newcommand{\oOme}{\overline{\Omega}}
\newcommand{\delx}{\delta_x}
\newcommand{\hF}{\widehat{F}}
\newcommand{\hG}{\widehat{G}}
\newcommand{\ou}{\overline{u}}
\newcommand{\uu}{\underline{u}}
\newcommand{\ext}{\mbox{ext}}
\newcommand{\extr}{\mbox{extr}}
\newtheorem{remark}{Remark}
\begin{document}

\title{Convergent finite difference methods for one-dimensional fully 
nonlinear second order partial differential equations}
 
\author{
Xiaobing Feng\thanks{Department of Mathematics, 
The University of Tennessee, Knoxville, TN 37996.
{\tt xfeng@math.utk.edu}. The work of this author 
was partially supported by the NSF grant DMS-0710831.}
\and
Chiu-Yen Kao\thanks{Department of Mathematical Sciences, Claremont Mckenna 
College, Claremont, CA 91711. {\tt Ckao@claremontmckenna.edu} }
\and 
Thomas Lewis\thanks{Department of Mathematics, The University of Tennessee, 
Knoxville, TN 37996. {\tt tlewis@math.utk.edu}. The work of this author 
was partially supported by the NSF grant DMS-0710831.}
}

\maketitle

\begin{abstract}
This paper develops a new framework for designing and analyzing convergent
finite difference methods for approximating both classical and viscosity solutions 
of second order fully nonlinear partial differential equations (PDEs) in 1-D. 
The goal of the paper is to extend the successful 
framework of monotone, consistent, and stable finite difference methods
for first order fully nonlinear Hamilton-Jacobi equations to second order 
fully nonlinear PDEs such as Monge-Amp\`ere and Bellman type equations.
New concepts of consistency, generalized monotonicity, and stability are 
introduced; among them, the generalized monotonicity and consistency, which are 
easier to verify in practice, are natural extensions of the corresponding 
notions of finite difference methods for first order 
fully nonlinear Hamilton-Jacobi equations. The main component of the proposed 
framework is the concept of a ``numerical operator", and the main idea
used to design consistent, generalized monotone and stable finite difference
methods is the concept of a ``numerical moment". These two new concepts play 
the same roles the ``numerical Hamiltonian" and the ``numerical viscosity" 
play in the finite difference framework for first order fully nonlinear 
Hamilton-Jacobi equations.
In the paper, two classes of consistent and monotone finite difference
methods are proposed for second order fully nonlinear PDEs. The first class 
contains Lax-Friedrichs-like methods which also are proved to be stable,
and the second class contains Godunov-like methods. Numerical results 
are also presented to gauge the performance of the proposed finite 
difference methods and to validate the theoretical results of the paper. 
\end{abstract}

\begin{keywords}
Fully nonlinear PDEs, Hamilton-Jacobi equations, Bellman equations,
viscosity solutions, finite difference methods, monotone schemes,
consistency, numerical operators, numerical moment
\end{keywords}

\begin{AMS}
65N06, 65N12
\end{AMS}

\pagestyle{myheadings}
\thispagestyle{plain}
\markboth{X.FENG, C.-Y.KAO, AND T. LEWIS}{FINITE DIFFERENCE METHODS FOR 2ND 
ORDER FULLY NONLINEAR PDEs}

\section{Introduction} \label{sec-1}
Fully nonlinear partial differential equations (PDEs) refers to a class 
of nonlinear PDEs which are nonlinear in the highest order derivatives of 
the unknown functions appearing in the equations. For example, the general 
first and second order fully nonlinear PDEs, respectively, have the form 
$H(\nabla u, u, x)=0$ and $F(D^2 u, \nabla u, u, x)=0$, where 
$\nabla u$ and $D^2 u$ denote the gradient vector and Hessian matrix 
of the unknown function $u$.  Fully nonlinear PDEs, which have experienced 
extensive analytical developments in the past thirty years 
(cf. \cite{Caffarelli_Cabre95, Bardi_Capuzzo97, Gilbarg_Trudinger01, Lieberman96}), 
arise from many scientific and engineering applications such as
differential geometry, astrophysics, antenna design, image processing, optimal
control, optimal mass transport, and geostrophical fluid dynamics. 
Fully nonlinear PDEs play a critical role for the solutions of these 
applications because they appear one way or another in the governing 
equations of these problems.

As expected, the study of first order fully nonlinear PDEs came first. 
Since the introduction of the notion of viscosity solutions by Crandall and 
Lions \cite{Crandall_Lions83} in 1983, the past thirty years has been a period
of explosive developments in analyzing first order 
fully nonlinear PDEs. Starting with the 
pioneering work of Crandall and Lions \cite{Crandall_Lions84},
extensive research has also been successfully carried out on developing numerical
methods, in particular monotone as well as other types of finite difference methods, for 
computing viscosity solutions of first order fully nonlinear PDEs,
especially those arising from the level set formulations of
moving interfaces and those arising from optimal control (cf. \cite{Shu07} and
the references therein). To overcome the low order accuracy barrier of
monotone finite difference methods, various high order local discontinuous
Galerkin (LDG) methods have also been developed recently in the literature
(cf. \cite{Shu07,Yan_Osher11} and the references therein).

In contrast with the success of PDE analysis and numerical approximation for
first order fully nonlinear PDEs, the situation for second order fully 
nonlinear PDEs is very different. On one hand, like in the case of
first order fully nonlinear PDEs, tremendous progresses in PDE analysis have 
been made in the past thirty years (cf. \cite{Gilbarg_Trudinger01,Caffarelli_Cabre95}).
On the other hand, not much progress on developing accurate and efficient 
numerical methods, especially Galerkin-type methods, for second order fully 
nonlinear PDEs has been made until very recently (cf. \cite{Feng_Glowinski_Neilan10,
Feng_Neilan11} and the references therein). The lack of progress is mainly 
due to the following two facts: (i) the 
notion of viscosity solutions is nonvariational; (ii) the conditional
uniqueness (i.e., uniqueness only holds in a restrictive function class)
of viscosity solutions is difficult to handle at the discrete level. 
The first difficulty prevents a direct construction of Galerkin-type methods  
and forces one to use indirect approaches as done in \cite{Dean_Glowinski03, 
Dean_Glowinski06b, Feng_Neilan09a, Feng_Neilan11}
for approximating viscosity solutions. The second difficulty prevents
any straightforward construction of finite difference methods because 
such a method does not have a mechanism to enforce the conditional
uniqueness and often fails to capture the sought-after viscosity 
solution. Since the scope of this paper is confined to the finite difference
method, Galerkin-type methods will not be discussed here. We refer the reader 
to the review paper \cite{Feng_Glowinski_Neilan10} for a detailed discussion 
of recent developments on Galerkin-type methods for second order fully 
nonlinear PDEs.  

The primary goal of this paper is to develop a new framework for 
designing and analyzing convergent finite difference methods for second 
order fully nonlinear (elliptic) PDEs. For the ease of presenting the ideas  
and to observe the page limitation of the journal, we shall only consider 
one-dimensional PDEs in this paper and leave the high dimensional 
generalizations to a forthcoming companion paper \cite{Feng_Kao_Lewis11}.
We use the phrase ``new framework" to distinguish the framework of this 
paper from the existing (abstract) framework originally developed by Barles and
Souganidis in \cite{Barles_Souganidis91} twenty years ago and further
developed recently by Caffarelli and Souganidis in \cite{Caffarelli_Souganidis08}.
Unlike Barles and Souganidis' framework which is abstract and broader
in applications,  our framework is specifically and only designed for finite 
difference methods which can be easily implemented on computers. As a result, 
the proposed framework has the advantages of being simple to understand and easy to 
utilize in practice. Moreover, the new framework is a natural extension
of the successful monotone finite difference framework developed for first order
fully nonlinear Hamilton-Jacobi equations (cf. \cite{Crandall_Lions84,
Shu07} and the references therein).
The main concept of the new framework is the ``numerical operator".  
The key components of the framework are new and easy-to-check notions of 
consistency and generalized monotonicity (g-monotonicity), which 
together with the well-known notion
of stability, form the backbones of the proposed finite difference framework.    
After the framework is established,
one must address a harder question of how to construct specific finite difference 
methods which fulfill the structure conditions (i.e., consistency, 
g-monotonicity, and stability) of the framework in order to make the framework practically useful. 
We note that this question was not addressed in
\cite{Barles_Souganidis91} as the goal of that paper was not to develop
practical numerical methods, and it took seventeen years to construct the first 
finite difference method which fulfills the structure conditions laid out 
in \cite{Barles_Souganidis91} for the second order fully 
nonlinear Monge-Amp\`ere equation in \cite{Oberman08}. 
Moreover, the method
of \cite{Oberman08} is a nonstandard finite difference
method because it requires the use of wide-stencil grids.
We do want to remark that many numerical methods, which 
may or may not fulfill the structure conditions of \cite{Barles_Souganidis91},
have been developed for Bellman type equations (cf. \cite{Barles_Jakobsen07,
Krylov12, Feng_Glowinski_Neilan10} and the references therein).
To address the above key question, our main idea is to introduce a new 
concept called the ``numerical moment". We like to stress that the numerical 
moment not only helps the construction of desired g-monotone finite difference 
methods, but also, we believe, provides a fundamental and indispensable mechanism 
for a finite difference method to overcome the two major difficulties
associated with numerical approximations of second order fully 
nonlinear PDEs. We also note that the new concepts of ``numerical operators"
and ``numerical moments" for second order fully nonlinear PDEs are natural 
extensions of the well-known concepts of ``numerical Hamiltonians" 
and ``numerical viscosities" for first order fully nonlinear
Hamilton-Jacobi equations.

This paper is organized as follows. In Section \ref{sec-2} 
we collect some preliminary materials such as notation and definitions.
In Section \ref{sec-3} we present our finite difference framework. 
The motivation and main ideas are heuristically explained. The 
main concepts and definitions of {\em numerical operators, consistency, 
g-monotonicity, and stability} are formally introduced and defined.
The main result of this section is a convergence theorem
which asserts that the solution of any consistent, g-monotone and stable 
finite difference method is guaranteed to converge to the unique 
viscosity solution of the underlying second order fully nonlinear PDE.
In Section \ref{sec-4} we introduce the concept of a {\em numerical moment}.  
With the help of the numerical moment and the inspiration given by the 
convergent finite difference schemes for first order fully nonlinear Hamilton-Jacobi 
equations, we are able to construct two classes of consistent and g-monotone
finite difference methods. The first class contains Lax-Friedrichs-like 
methods and the second class contains Godunov-like methods. 
By using a non-standard fixed point argument we also prove that every 
consistent and g-monotone Lax-Friedrichs-like method is uniquely solvable 
and stable 
for a given class of fully nonlinear operators.  
In Section \ref{sec-5} we present some detailed numerical results 
to gauge the performance of the proposed finite difference methods and 
to validate the theoretical results of the paper. The paper is
concluded by a short summary in Section \ref{sec-6}.

\section{Preliminaries}\label{sec-2}

In this paper we adopt standard function and space notations as in 
\cite{Gilbarg_Trudinger01, Caffarelli_Cabre95}. For example, for a bounded 
open domain $\Ome\subset\mathbf{R}^d$, $B(\Ome)$, $USC(\Ome)$ and 
$LSC(\Ome)$ are used to denote, respectively, the spaces of bounded, 
upper semi-continuous and lower semicontinuous functions on $\Ome$.
Also, for any $v\in B(\Ome)$, we define 
\[
v^*(x):=\limsup_{y\to x} v(y) \qquad\mbox{and}\qquad
v_*(x):=\liminf_{y\to x} v(y). 
\]
Then, $v^*\in USC(\Ome)$ and $v_*\in LSC(\Ome)$, and they are called
{\em the upper and lower semicontinuous envelopes} of $v$, respectively.

Given a bounded function $F: \cS^{d\times d}\times\mathbf{R}^d\times 
\mathbf{R}\times \oOme \to \mathbf{R}$, where $\cS^{d\times d}$ denotes the set 
of $d\times d$ symmetric real matrices,  the general second order 
fully nonlinear PDE takes the form
\begin{align}\label{e2.1}
F(D^2u,\nabla u, u, x) = 0 \qquad\mbox{in } \oOme.
\end{align}
Note that here we have used the convention of writing the boundary condition as a
discontinuity of the PDE (cf. \cite[p.274]{Barles_Souganidis91}).

The following two definitions are standard (cf. \cite{Gilbarg_Trudinger01,
Caffarelli_Cabre95,Barles_Souganidis91}).

\begin{definition}\label{def2.1}
Equation \eqref{e2.1} is said to be elliptic if for all 
$(\mathbf{p},\lambda,x)\in \mathbf{R}^d\times \mathbf{R}\times \oOme$ there holds
\begin{align}\label{e2.2}
F(A, \mathbf{p}, \lambda, x) \leq F(B, \mathbf{p}, \lambda, x) \qquad\forall 
A,B\in \cS^{d\times d},\, A\geq B, 
\end{align}
where $A\geq B$ means that $A-B$ is a nonnegative definite matrix.
\end{definition}
We note that when $F$ is differentiable, the ellipticity
also can be defined by requiring that the matrix $\frac{\partial F}{\partial A}$ 
is negative semi-definite (cf. \cite[p. 441]{Gilbarg_Trudinger01}).

\begin{definition}\label{def2.2}
A function $u\in B(\Ome)$ is called a viscosity subsolution (resp. 
supersolution) of \eqref{e2.1} if, for all $\varphi\in C^2(\oOme)$,  
if $u^*-\varphi$ (resp. $u_*-\varphi$) has a local maximum 
(resp. minimum) at $x_0\in \oOme$, then we have
\[
F_*(D^2\varphi(x_0),\nabla \varphi(x_0), u^*(x_0), x_0) \leq 0 
\]
(resp. $F^*(D^2\varphi(x_0),\nabla \varphi(x_0), u_*(x_0), x_0) \geq 0$).
The function $u$ is said to be a viscosity solution of \eqref{e2.1}
if it is simultaneously a viscosity subsolution and a viscosity
supersolution of \eqref{e2.1}.
\end{definition}


We remark that if $F$ and $u$ are continuous, then the upper and lower $*$ 
indices can be removed in Definition \ref{def2.2}. The definition
of the ellipticity implies that the differential operator $F$ 
must be non-increasing in its first argument in order to be 
elliptic. It turns out that the ellipticity provides a sufficient 
condition for equation \eqref{e2.1} to fulfill a maximum principle
(cf. \cite{Gilbarg_Trudinger01, Caffarelli_Cabre95}).
It is clear from the above definition that viscosity solutions 
in general do not satisfy the underlying PDEs in a tangible sense, and
the concept of viscosity solutions is {\em nonvariational}. Such
a solution is not defined through integration by parts against arbitrary test 
functions; hence, it does not satisfy an integral identity. As pointed
out in Section \ref{sec-1}, the nonvariational nature of viscosity
solutions is the main obstacle that prevents direct construction 
of Galerkin-type methods, which are based on variational formulations.

\section{A monotone finite difference framework}\label{sec-3}

We consider the following fully nonlinear second order two-point 
boundary value problem:
\begin{alignat}{2}\label{e3.1a}
F(u_{xx}, x) &=0,  &&\quad\qquad a< x<b,\\
               u(a) &=u_a, && \label{e3.1b}\\
               u(b) &=u_b, && \label{e3.1c}
\end{alignat}
where $u_a$ and $u_b$ are two given numbers and  $F$ is assumed to be an elliptic
operator in a function class ${\cal A}\subset C^0(\Omega)$.
We remark that the results of this paper can be easily extended to PDEs 
with general form $F(u_{xx}, u_x, u, x) =0$. 

To construct finite difference methods for the above problem, we first need 
to have a mesh for the domain/interval $\Omega:=(a,b)$. For simplicity, 
we only consider uniform meshes here, although our methods can be easily 
generalized to 
nonuniform meshes.  Let $J$ be a positive integer and $h=\frac{b-a}{J-1}$.  
We divide $\Omega$ into $J-1$ subintervals/subdomains with grid points
$x_j=a+ (j-1)h$ for $j=1,2,\cdots,J$, and let $\cT_h=\{x_j\}_{j=1}^J$ be a mesh
of $\oOme$. Define the forward and backward difference operators by
\[
\del_x^+ v(x):= \frac{v(x+h) - v(x)}{h},\qquad
\del_x^- v(x):= \frac{v(x)- v(x-h)}{h},
\]
for a continuous function $v$ defined in $\Ome$ and 
\[
\del_x^+ V_j:= \frac{V_{j+1} - V_j}{h},\qquad
\del_x^- V_j:= \frac{V_j- V_{j-1}}{h},
\]
for a grid function $V$ defined on the mesh $\cT_h$. The operators 
$\delx^+$ and $\delx^-$ will serve as building blocks in the construction of 
our finite difference methods in the sense that we approximate
all first and second derivatives by using combinations and
compositions of these two operators.

To approximate $u_x(x_j)$, we have two options
\[
u_x(x_j)\approx \delx^+ u(x_j), \qquad\qquad
u_x(x_j)\approx \delx^- u(x_j).
\]
As a result, we have three possible ways to approximate $u_{xx}(x_j)$ given by
\begin{align*}
u_{xx}(x_j) &\approx \delx^+ \delx^+ u(x_j),\hskip 1.2in
u_{xx}(x_j) \approx \delx^- \delx^- u(x_j),\\
u_{xx}(x_j) &\approx \delx^+ \delx^- u(x_j) = \delx^- \delx^+  u(x_j).
\end{align*}
It is easy to verify that
\[
\delx^+ \delx^+ u(x_j) = \delx^2 u(x_{j+1}), \quad
\delx^- \delx^- u(x_j) = \delx^2 u(x_{j-1}), \quad
\delx^- \delx^+ u(x_j) = \delx^2 u(x_j),
\]
where 
\[
\delx^2 v(x):=\frac{v(x-h) -2v(x)+v(x+h)}{h^2}
\]
for a continuous function $v$ and 
\[
\delx^2 V_j:=\frac{V_{j+1} -2V_j+V_{j-1} }{h^2}
\]
for a grid function $V$ on the mesh $\cT_h$.

The above simple argument motivates us to propose the following
general finite difference method for equation \eqref{e3.1a}:
Find a grid function $U$ such that
\begin{align}\label{fdm}
\hF(\delx^2 U_{j-1}, \delx^2 U_j, \delx^2 U_{j+1}, x_j)=0 
\end{align}
for $j=2,3,\cdots,J-1$. As expected, $U_j$ is intended to be an approximation 
of $u(x_j)$ for $j=1,2,\cdots, J$, and $U_0$ and $U_{J+1}$ are two ghost values. 

\begin{definition}\label{def1}
The function $\hF: \bfR\times\bfR\times\bfR\times\bfR\to \bfR$ in \eqref{fdm} 
is called  a {\em numerical operator}. Finite difference method \eqref{fdm} is
said to be an {\em admissible} scheme for problem \eqref{e3.1a}--\eqref{e3.1c}
if it has at least one (grid function) solution $U$  
such that $U_1=u_a$ and $U_J=u_b$.
\end{definition}

It is easy to understand that $\hF$ needs to be some approximation of the 
differential operator $F$ in order for scheme \eqref{fdm} to be relevant to
the original PDE problem.  Generally, different {\em numerical operators} 
$\hF$ should result in different finite difference methods. A natural and 
important question is how to construct $\hF$. We shall defer answering 
this question to the next section where we present two types 
of {\em numerical operators} $\hF$. For now, we 
propose a set of conditions (or properties) which we like 
to impose on $\hF$. We choose conditions such that if $\hF$ satisfies
them, then the solution of the finite difference method \eqref{fdm}
is guaranteed to converge to the viscosity solution of problem 
\eqref{e3.1a}--\eqref{e3.1c}. The conditions will be reflected
in the following definition.

\begin{definition}\label{def3.1}
\begin{itemize}
\item[{\rm (i)}] Finite difference method \eqref{fdm} is said to be 
a {\em consistent} scheme if $\hF$ satisfies  
\begin{align}\label{A1a}
\liminf_{p_k\to p, k=1,2,3\atop \xi\to x} \hF(p_1,p_2,p_3, \xi) 
&\geq F_*(p, x),\\
\limsup_{p_k\to p, k=1,2,3 \atop \xi\to x} \hF(p_1,p_2,p_3, \xi) 
&\leq F^*(p, x), \label{A1b} \\
\liminf_{p_k\to -\infty, k=1,2,3\atop \xi\to x} \hF(p_1,p_2,p_3, \xi) 
&\geq F_*(-\infty, x) := \liminf_{p \to -\infty} F(p,x), \label{A1c}\\
\limsup_{p_k\to \infty, k=1,2,3 \atop \xi\to x} \hF(p_1,p_2,p_3, \xi) 
&\leq F^*(\infty, x) := \limsup_{p \to \infty} F(p,x), \label{A1d} 
\end{align}
for $p\in \bfR$, where $F_*$ and $F^*$ denote respectively the lower and the upper
semi-continuous envelopes of $F$. 

\item[{\rm (ii)}] Finite difference method \eqref{fdm} is said to be 
a {\em g-monotone} scheme if for each $2\leq j\leq J-1$,  $\hF(p_1,p_2,p_3,x_j)$ 
is monotone increasing in $p_1$ and $p_3$ and monotone decreasing in $p_2$;
that is, $\hF(\uparrow,\downarrow,\uparrow, x_j)$ for $j=2,3,\cdots,J-1$.

\item[{\rm (iii)}] Let \eqref{fdm} be an admissible finite difference 
method.  A solution $U$ of \eqref{fdm} is said to be 
{\em stable} if there exists a constant $C>0$, which is independent 
of $h$, such that $U$ satisfies
\begin{equation}\label{A3}
\|U\|_{\ell^\infty(\cT_h)}:=\max_{1\leq j\leq J} |U_j| \leq C. 
\end{equation}
Also, \eqref{fdm} is said to be a {\em stable scheme} if all of its solutions 
are stable solutions.

\end{itemize}

\end{definition}

\begin{remark}\label{rem3.1}
(a) The consistency and g-monotonicity (generalized monotonicity)
defined above are different from those given in 
\cite{Barles_Souganidis91,Kuo_Trudinger92,
Caffarelli_Souganidis08}. $\hF$ is asked to be monotone in $\delta_x^2 U_{j-1},
\delta_x^2 U_j$ and $\delta_x^2 U_{j+1}$, not in each individual 
entry $U_j$. To avoid confusion, we use the words ``g-monotonicity" and
``g-monotone" to indicate that the monotonicity is defined as above.  
We shall demonstrate in the next section that the above new 
definitions, especially the one for g-monotonicity, are more suitable 
and much easier to verify for (practical) finite difference methods. 
The new notions of consistency and g-monotonicity are logical 
extensions of their widely used counterparts for the first order 
Hamilton-Jacobi equations \cite{Crandall_Lions84,Shu07}.

(b) On the other hand, the above stability definition is the same as
that given in \cite{Barles_Souganidis91,Kuo_Trudinger92,
Caffarelli_Souganidis08}. 

(c) We note that if $F$ is a continuous function, we can also assume
that $\hF$ is a continuous function. Then, \eqref{A1a} and \eqref{A1b} 
reduce to the condition $\hF(p,p,p, x)=F(p,x)$.


(d) The ``good"  numerical operators $\hF$ we construct so far 
(cf. Section \ref{sec-4}) all have the form 
\begin{equation}\label{hG}
\hF(p_1, p_2, p_3, \xi)= \hG \big(\overline{p_2}, p_2, \xi \big)
\end{equation}
for some function $\hG$ and $\overline{p_2} := (p_1 + p_3)/2$. 
In other words,  $\hF$ is a function of $\overline{p_2}$ and $p_2$. 
Hence,  a g-monotone $\hF$ should be  increasing in $p_1+p_3$ and 
decreasing in $p_2$.  In this case, the consistency condition reduces to 
\begin{align}\label{hG_1}
\liminf_{\sigma_1,\sigma_2\to p\atop \xi\to x} \hG(\sigma_1,\sigma_2, \xi) 
&\geq F_*(p, x),\\
\limsup_{\sigma_1,\sigma_2\to p\atop \xi\to x} \hG(\sigma_1,\sigma_2, \xi) 
&\leq F^*(p, x), \label{hG_2} \\
\liminf_{\sigma_1,\sigma_2\to -\infty\atop \xi\to x} \hG(\sigma_1,\sigma_2, \xi) 
&\geq F_*(-\infty, x) := \liminf_{p \to -\infty} F(p,x), \label{hG_3}\\
\limsup_{\sigma_1,\sigma_2\to \infty\atop \xi\to x} \hG(\sigma_1,\sigma_2, \xi) 
&\leq F^*(\infty, x) := \limsup_{p \to \infty} F(p,x). \label{hG_4}
\end{align}
We shall need to use the above form of $\hF$ in the proof of our 
convergence theorem, see Theorem \ref{con_thm} below.
\end{remark}

For a given grid function $U$, we define a piecewise constant 
extension function $u_h$ of $U$ as follows:
\begin{align}\label{def_uh} 
u_h(x) := U_j \qquad\forall x\in (x_{j-\frac12}, x_{j+\frac12}], \quad
j=1,2,\cdots, J,
\end{align}
where $x_{j\pm \frac12}= x_j\pm \frac{h}2$ for $j=1,2,\cdots, J$.

\begin{definition}\label{def3.2}
Problem \eqref{e3.1a}--\eqref{e3.1c} is said to satisfy a {\em comparison 
principle} if the following statement holds. For any upper semi-continuous 
function $u$ and lower semi-continuous function $v$ on $\overline{\Omega}$,  
if $u$ is a viscosity subsolution and $v$ is a viscosity supersolution 
of \eqref{e3.1a}--\eqref{e3.1c}, then $u\leq v$ on $\overline{\Omega}$.
\end{definition}

\begin{remark}
Since the comparison principle immediately infers the uniqueness of viscosity 
solutions, it is also called a {\em strong uniqueness property} 
for problem \eqref{e3.1a}--\eqref{e3.1c} (cf. \cite{Barles_Souganidis91}).
\end{remark}

We are now ready to state and prove the following convergence theorem, 
which is the main result of this paper.

\begin{theorem}\label{con_thm}
Suppose problem \eqref{e3.1a}--\eqref{e3.1c} satisfies the comparison 
principle of Definition \ref{def3.2} and has a unique continuous viscosity 
solution $u$. Let $U$ be a solution to a consistent, g-monotone, and stable 
finite difference method \eqref{fdm} with $\hF$ satisfying \eqref{hG}, and let $u_h$ be its 
piecewise constant extension as defined above.
Then $u_h$ converges to $u$ locally uniformly as $h\to  0^+$. 
\end{theorem}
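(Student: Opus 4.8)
The plan is to run the Barles--Souganidis half-relaxed-limits argument, with the structural hypothesis \eqref{hG} and g-monotonicity playing the role of the classical pointwise monotonicity. First I would introduce the relaxed limits $\ou(x):=\limsup_{h\to0^+,\,y\to x}u_h(y)$ and $\uu(x):=\liminf_{h\to0^+,\,y\to x}u_h(y)$, which by the stability bound \eqref{A3} are finite on $\oOme$, satisfy $\uu\le\ou$, and obey $\ou\in USC(\oOme)$, $\uu\in LSC(\oOme)$. The theorem then reduces to showing that $\ou$ is a viscosity subsolution and $\uu$ a viscosity supersolution of \eqref{e3.1a}--\eqref{e3.1c} (with the boundary data read into $F$ as in the paper's convention): granted this, the comparison principle of Definition \ref{def3.2} gives $\ou\le\uu$ on $\oOme$, which together with $\uu\le\ou$ forces $\ou=\uu=:u$; this $u$ is then continuous and, by assumed uniqueness, is the viscosity solution. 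Local uniform convergence $u_h\to u$ is the standard consequence of $\ou=\uu$ and the uniform bound on $u_h$: an $\epsilon$-separated subsequence would yield, along a convergent sequence of points, a limiting value strictly between $\uu$ and $\ou$, a contradiction.

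The core is the subsolution property of $\ou$ (the supersolution property of $\uu$ being symmetric). Fix $\varphi\in C^2(\oOme)$ so that $\ou-\varphi$ has a strict local maximum at $x_0$. It is convenient to first replace $\varphi$ near $x_0$ by its second-order Taylor polynomial plus $\lambda|x-x_0|^2$ for small $\lambda>0$ (this preserves the strict-maximum property and makes every centered second difference of the test function equal $\varphi''(x_0)+2\lambda$ exactly), letting $\lambda\downarrow0$ at the end. A routine argument then produces $h_n\to0^+$ and interior indices $j_n$ with $x_{j_n}\to x_0$, $U^{(n)}_{j_n}\to\ou(x_0)$, such that $k\mapsto U^{(n)}_k-\varphi(x_k)$ has a local maximum over a fixed neighborhood of $x_0$ at $k=j_n$; in particular $U^{(n)}_k\le\varphi(x_k)+M_n$ there, with equality at $j_n$ and $M_n\to0$.

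Here the form \eqref{hG} is indispensable. Since $\hF(p_1,p_2,p_3,x_j)=\hG\bigl((p_1+p_3)/2,p_2,x_j\bigr)$ with $\hG$ nondecreasing in its first slot and nonincreasing in its second, the map $W\mapsto\hF(\delx^2W_{j-1},\delx^2W_j,\delx^2W_{j+1},x_j)$ is, for $W_{j}$ and $W_{j\pm2}$ held fixed, nonincreasing in $W_{j\pm1}$. Comparing $U^{(n)}$ with the grid function $V^{(n)}$ that equals $\varphi(\cdot)+M_n$ on $\{j_n-1,j_n,j_n+1\}$ and $U^{(n)}$ elsewhere (so $U^{(n)}\le V^{(n)}$ at $j_n\pm1$, with equality at $j_n$ and $j_n\pm2$), g-monotonicity gives
\[
0=\hF\bigl(\delx^2U^{(n)}_{j_n-1},\delx^2U^{(n)}_{j_n},\delx^2U^{(n)}_{j_n+1},x_{j_n}\bigr)\ \ge\ \hG\bigl(\tfrac12(\delx^2V^{(n)}_{j_n-1}+\delx^2V^{(n)}_{j_n+1}),\,\delx^2V^{(n)}_{j_n},\,x_{j_n}\bigr),
\]
where $\delx^2V^{(n)}_{j_n}=\varphi''(x_0)+2\lambda$ exactly (the additive constant cancels) and, using $U^{(n)}_{j_n\pm2}\le\varphi(x_{j_n\pm2})+M_n$, the averaged term has $\limsup_n\tfrac12(\delx^2V^{(n)}_{j_n-1}+\delx^2V^{(n)}_{j_n+1})\le\varphi''(x_0)+2\lambda$.

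The remaining step, which I expect to be the main obstacle, is to pass to the limit in the first (numerical-moment) slot of $\hG$. That slot is bounded above as just noted but is not controlled from below by the test function alone, so a priori it may converge along a subsequence to any $q\in[-\infty,\varphi''(x_0)+2\lambda]$ while the second slot converges to $\varphi''(x_0)+2\lambda$; yet the consistency inequalities \eqref{hG_1}--\eqref{hG_4} only apply when both slots tend to a common value (or both to $-\infty$). I would resolve this by a case analysis on whether the numerical moment tends to zero: if it does, the two slots share the limit and \eqref{hG_1} (or \eqref{hG_3} if that limit is $-\infty$) gives a one-sided inequality for $F_*$ at that value; if it does not, one uses the monotonicity of $\hG$ in the moment slot together with the ellipticity of $F$ — which makes $F_*$ nonincreasing in its first argument — to bridge the mismatch. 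In every case one obtains $F_*(\varphi''(x_0)+2\lambda,x_0)\le0$; letting $\lambda\downarrow0$ and using lower semicontinuity of $F_*$ yields the required $F_*(\varphi''(x_0),x_0)\le0$. Boundary touching points $x_0\in\{a,b\}$ are treated by the same machinery, now using the admissibility constraint $U_1=u_a$, $U_J=u_b$ and the generalized (relaxed) boundary condition encoded in $F_*$ and $F^*$.
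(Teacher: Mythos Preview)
Your overall architecture is the same as the paper's: half-relaxed limits, reduction to quadratic test functions with an $\epsilon$-penalty, extraction of a subsequence of near-maximizers, and the comparison principle to close. The divergence is at the one place you flag as the main obstacle, and there your proposed resolution does not work.

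After your $V^{(n)}$-comparison you obtain $0\ge\hG(\bar\sigma_1^{(n)},\sigma_2,x_{j_n})$ with $\sigma_2=\varphi''(x_0)+2\lambda$ fixed and $\bar\sigma_1^{(n)}\le\sigma_2$ but otherwise uncontrolled. In your Case~2, where $\bar\sigma_1^{(n)}\to q<\sigma_2$ (possibly $-\infty$), the consistency inequalities \eqref{hG_1}--\eqref{hG_4} are inapplicable because the two slots have different limits. To force a common limit you would have to either raise the first slot to $\sigma_2$ or lower the second slot to $q$; but $\hG$ is \emph{increasing} in the first slot and \emph{decreasing} in the second, so either move \emph{increases} $\hG$, and from $0\ge\hG(\bar\sigma_1^{(n)},\sigma_2,\cdot)$ you learn nothing about $\hG(\sigma_2,\sigma_2,\cdot)$ or $\hG(q,q,\cdot)$. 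Ellipticity of $F_*$ would help only after you have landed on $F_*(\cdot,x_0)$ at \emph{some} common value, which is precisely what you cannot reach. So Case~2 is a genuine gap, not a technicality.

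The paper closes this gap by a different maneuver. It does \emph{not} freeze the second slot at $\sigma_2$. Instead it keeps both slots equal to the discrete second differences of $u_{h_k}$ itself, and chooses the subsequence so that $p_2^{(k)}:=\delta_{x,h_k}^2u_{h_k}(\xi_k)\to L:=\liminf_{h\to0}\delta_{x,h}^2\ou(x_0)$. The algebraic identity
\[
\tfrac12\bigl(\delta_{x,h}^2u_h(x-h)+\delta_{x,h}^2u_h(x+h)\bigr)-\delta_{x,h}^2u_h(x)
=\delta_{x,2h}^2u_h(x)-\delta_{x,h}^2u_h(x)
\]
then shows that, along this particular subsequence, the first slot exceeds the second by a quantity with nonnegative $\liminf$ (the $2h$-difference cannot drop below the realized $\liminf$). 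G-monotonicity now lets you lower the first slot to $p_2^{(k)}+o(1)$, so \emph{both} slots tend to $L$, consistency yields $0\ge F_*(L,x_0)$, and only \emph{then} does ellipticity bridge $L\le\varphi''(x_0)$ to the desired $F_*(\varphi''(x_0),x_0)\le0$. The missing idea in your argument is exactly this: achieve a common limit in the two slots by working with $u_{h_k}$ in both and selecting the subsequence that realizes the $\liminf$ of the central second difference, rather than substituting the test function into the second slot.
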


\begin{proof}
We divide the proof into five steps. 

{\em Step 1}: Since $U$ satisfies \eqref{A3}, it is trivial to check 
that $u_h$ satisfies
\begin{equation}\label{e3.7}
\|u_h\|_{L^\infty(\Ome)} \leq C.
\end{equation}
Define $\ou, \uu\in L^\infty(\Ome)$ by 
\[
\ou(x):=\limsup_{\xi\to x\atop h\to 0^+} u_h(\xi), 
\qquad \uu(x):=\liminf_{\xi\to x\atop h\to 0^+} u_h(\xi).
\]
We now show that $\ou$ and $\uu$ are, respectively, a viscosity 
subsolution and a viscosity supersolution of \eqref{e3.1a}--\eqref{e3.1c}.
Hence, they must coincide by the comparison principle.

Suppose that $\ou-\varphi$ takes a local maximum 
at $x_0\in \Ome$ for some $\varphi\in C^2(\oOme)$.
We first assume that $\varphi\in \mathbb{P}_2$, the set
of all quadratic polynomials. In {\em Step 3} we will
consider the general case $\varphi\in C^2(\oOme)$. 
Without loss of generality, we assume $\ou(x_0)-\varphi(x_0)$ is a 
strict local maximum and $\ou(x_0)=\varphi(x_0)$ (after a translation 
in the dependent variable).  Then there exists a ball/interval, 
$B_{r_0}(x_0)$, centered at $x_0$ with radius $r_0>0$ such that
\begin{equation}\label{e3.9}
\ou(x)-\varphi(x) < \ou(x_0)-\varphi(x_0)=0 \qquad\forall x\in B_{r_0}(x_0).
\end{equation}
Thus, there exists sequences $\{h_k\}_{k\geq 1}$ and $\{\xi_k\}_{k\geq 1}$ 
such that as $k\to \infty$,
\begin{align*}
&h_k\to 0^+,\qquad \xi_k\to x_0, \qquad u_{h_k}(\xi_k)\to \ou(x_0),  \\
& u_{h_k}(x)-\varphi(x) \mbox{ takes a local maximum at $\xi_k$ for sufficiently 
large $k$},  
\end{align*}
and 
\begin{equation}\label{e3.10b}
\lim_{k \to \infty} \delta_{x, h_k}^2 u_{h_k} (\xi_k) = \liminf_{h \to 0} \delta_{x,h}^2 \ou(x_0), 
\end{equation}
where
\[
\delta_{x,\rho}^2 u_h(\xi):=\frac{ u_h(\xi-\rho) -2u_h(\xi) + u_h(\xi+\rho) }{\rho^2}
\quad \forall \xi\in (a+\rho,b-\rho), \,\, \rho>0. 
\] 
We remark that the right-hand side of \eqref{e3.10b} could either be finite or 
negative infinite.

Then, there exists $k_0>>1$ such that $h_k< r_0$ and 
\begin{equation} \label{e3.10}
0\stackrel{k\to \infty}{\longleftarrow} u_{h_k}(\xi_k)-\varphi(\xi_k) 
\geq u_{h_k}(x)-\varphi(x) \quad\forall x\in B_{r_0}(x_0), \,\, k\geq k_0.
\end{equation}

\medskip
{\em Step 2}:  Since $U$ satisfies \eqref{fdm} with $\hF$ being of the form \eqref{hG} at every interior grid point,  
it is easy to check that for $x\in \Ome_h:=( a+\frac{3h}2, b-\frac{3h}2)$, 
\begin{align}\label{e3.11}
0&=\hF\bigl(\delta_{x,h}^2 u_{h}(x-h), \delta_{x,h}^2 u_{h}(x),\delta_{x,h}^2 u_h(x+h), x) \\
&=\hG\bigl(\delta_{x,h}^{\overline{2}} u_h(x), \delta_{x,h}^2 u_h(x),x\bigr), \nonumber
\end{align}
where
\[
\delta_{x,h}^{\overline{2}} u_h(x):= \delta_{x,h}^2 u_h(x-h)+ \delta_{x,h}^2 u_h(x+h). 
\]

Since $u_{h_k}(x)-\varphi(x)$ takes a local maximum at $\xi_k$ 
and $h_k< r_0$ for $k\geq k_0$, by \eqref{e3.10} we have 
\begin{equation}\label{e3.14}
\delta_{x,h_k}^2 u_{h_k}(\xi_k) \leq \delta_{x,h_k}^2 \varphi(\xi_k) 
=\varphi_{xx} (x_0)
\qquad \forall k\geq k_0.
\end{equation}
Also, by \eqref{e3.9}, we get 
\[
\delta_{x,h}^2 \ou(x_0) \leq \delta_{x,h}^2 \varphi(x_0) =\varphi_{xx} (x_0)
\qquad \forall h\leq  r_0.
\]
Thus,
\begin{equation}\label{e3.14a}
\limsup_{h\to 0} \delta_{x,h}^2 \ou(x_0) \leq   \varphi_{xx} (x_0).
\end{equation}

Next, a direct computation yields that
\begin{equation}\label{e3.12}
\delta_{x,h}^{\overline{2}} u_h(x) = \delta_{x,h}^2 u_h(x) + 2R_h u_h(x), 
\end{equation}
where
\[
R_h u_h(x):= \delta_{x,2h}^2 u_h(x) - \delta_{x,h}^2 u_h(x).
\]
By \eqref{e3.10b} and the definition of $\liminf$ we get 
\begin{align}\label{doubted}
	\liminf_{k \to \infty} \delta_{x,2h_k}^2 u_{h_k}(\xi_k)
	 &= \liminf_{k \to \infty} \delta_{x, 2h_k}^2 \ou (x_0) \\
	&\geq \liminf_{h \to 0} \delta_{x,h}^2 \ou (x_0)
	= \lim_{k \to \infty} \delta_{x, h_k}^2 u_{h_k}(\xi_k). \nonumber
\end{align}
Thus, 
\begin{align} \label{rh}
	\liminf_{k \to \infty} R_{h_k} u_{h_k}(\xi_k) 
	 = \liminf_{k \to \infty} \delta_{x,2h_k}^2 u_{h_k}(\xi_k) 
	   - \lim_{k \to \infty} \delta_{x,h_k}^2 u_{h_k}(\xi_k)  \geq 0,
\end{align}
and there exists a sequence $\{ \epsilon_k \}_{k \geq 1}$ and a constant $k_1 >> 1$ such that 
\begin{subequations} \label{db}
\begin{align}\label{db_a}
	 \delta_{x,h_k}^{\overline{2}} u_{h_k}(\xi_k) &\geq \delta_{x, h_k}^2  u_{h_k}(\xi_k) + \epsilon_k, \qquad \forall k \geq k_1, \\
	 \lim_{k \to \infty} \epsilon_k &= 0 \label{db_b}
\end{align}
\end{subequations}
by \eqref{e3.12} and \eqref{rh}.
  
Now, it follows from \eqref{e3.11}, \eqref{db_a},  and the 
g-monotonicity of the numerical operator $\hF$ (or $\hG$) that for $k\geq \max\{k_0,k_1\}$,
\begin{align*}
0 &=\hF\bigl(\delta_{x,h_k}^2 u_{h_k}(\xi_k-h_k), \delta_{x,h_k}^2 u_{h_k}(\xi_k),
\delta_{x,h_k}^2 u_{h_k}(\xi_k+h_k),\xi_k\bigr) \\
 &=\hG(\delta_{x,h_k}^{\overline{2}} u_{h_k}(\xi_k), \delta_{x,h_k}^2 u_{h_k}(\xi_k),\xi_k \bigr)  \\
&\geq \hG\bigl(\delta_{x,h_k}^2 u_{h_k}(\xi_k) + \epsilon_k,\delta_{x,h_k}^2 u_{h_k}(\xi_k) ,\xi_k\bigr).
\end{align*}
Thus, by \eqref{e3.10b}, \eqref{db_b},  the consistency of $\hF$ (or $\hG$), and \eqref{e3.14a} we get
\begin{align*}
0 &= \liminf_{k\to \infty} \hF\bigl(\delta_{x,h_k}^2 u_{h_k}(\xi_k-h_k), 
\delta_{x,h_k}^2 u_{h_k}(\xi_k), \delta_{x,h_k}^2 u_{h_k}(\xi_k+h_k),\xi_k\bigr)\\
&= \liminf_{k\to \infty} \hG\bigl(\delta_{x,h_k}^2 u_{h_k}(\xi_k) + \epsilon_k,\delta_{x,h_k}^2 u_{h_k}(\xi_k),\xi_k\bigr) \\
& \geq F_*(\lim_{k\to\infty} \delta_{x,h_k}^2 u_{h_k}(\xi_k), x_0) \nonumber \\
&= F_*(\liminf_{h \to 0} \delta_{x,h}^2 \ou(x_0), x_0) \nonumber\\
&\geq F_*(\limsup_{h \to 0} \delta_{x,h}^2 \ou(x_0), x_0) \nonumber\\
&\geq F_*( \varphi_{xx}(x_0), x_0), \nonumber
\end{align*}
where we have used the fact that $F_*$ is decreasing in its first argument to obtain the last two inequalities. This 
is true by the definition of $F_*$ and Definition \ref{def1}.  

\medskip
{\em Step 3}: We consider the general case $\varphi \in C^2(\oOme)$ which
is alluded in {\em Step 2}. Recall that $\ou-\varphi$ is assumed to have 
a local maximum at $x_0$. Using Taylor's formula we write
\begin{align*}
\varphi(x) &=  \varphi(x_0) + \varphi_x(x_0) (x-x_0) 
+\frac12 \varphi_{xx}(x_0) (x-x_0)^2 + o(|x-x_0|^2) \\
&:=p(x) + o(|x-x_0|^2). \nonumber
\end{align*}
For any $\epsilon>0$, we define the following quadratic polynomial: 
\begin{align*}
p^\epsilon(x) &:= p(x) + \epsilon (x-x_0)^2 \\
&= \varphi(x_0) + \varphi_x(x_0) (x-x_0) 
+ \Bigl[\epsilon+\frac{\varphi_{xx}(x_0)}2\Bigr] (x-x_0)^2 .
\end{align*}

Trivially, $p^\epsilon_{xx}(x) = 2\epsilon + \varphi_{xx}(x_0)$ and
$\varphi(x)-p^\epsilon(x) = o(|x-x_0|^2)- \epsilon (x-x_0)^2 \leq 0$. 
Thus, $\varphi-p^\epsilon$ 
has a local maximum at $x_0$, Therefore, $\ou- p^\epsilon$ has a local
maximum at $x_0$.  By the result of {\em Step 2} we have 
$F_*(p^\epsilon_{xx}(x_0), x_0)\leq 0$, that is,  
$F_*(2\epsilon +\varphi_{xx}(x_0), x_0)\leq 0$.  Taking 
$\liminf_{\epsilon\to 0}$ and using the lower semicontinuity of $F_*$ 
we obtain $0\geq \liminf_{\epsilon\to 0} F_*(2\epsilon +\varphi_{xx}(x_0), x_0)
\geq  F_*(\varphi_{xx}(x_0), x_0)$. Thus, $\ou$ is a viscosity subsolution 
of \eqref{e3.1a}--\eqref{e3.1c}.

\medskip
{\em Step 4}: By following almost the same lines as those of Step 2 and 3, 
we can show that if $\uu-\varphi$ takes a local minimum at $x_0\in \Omega$ 
for some $\varphi\in C^2(\oOme)$, then $F^*(\varphi_{xx}(x_0), x_0)\geq 0$. 
Hence,  $\uu$ is a viscosity supersolution of \eqref{e3.1a}--\eqref{e3.1c}. 

\medskip
{\em Step 5}: By the comparison principle (see Definition \ref{def3.2}), we
get $\ou\leq \uu$ on $\Omega$. On the other hand, by their definitions,
we have $\uu\leq \ou$ on $\Omega$. Thus, $\ou=\uu$, which coincides 
with the unique continuous viscosity solution $u$ of 
\eqref{e3.1a}--\eqref{e3.1c}. The proof is complete.
\end{proof}

\section{Two types of g-monotone finite difference methods}\label{sec-4}
In this section we first construct two classes of practical finite difference
methods of the form \eqref{fdm}. Using the first class of methods as examples,
we then go through all the steps for verifying the assumptions of 
Theorem \ref{con_thm}, in particular, to present a fixed point argument
for verifying the admissibility and stability.  

\subsection{Finite difference methods with explicit numerical moments}
\label{sec-4.1}

We propose the following family of schemes with numerical operators:
\begin{equation}\label{example123}
\hF_\beta(p_1,p_2,p_3,x):= F(\beta_1 p_1 + \beta_2 p_2 + \beta_3 p_3, x) 
+\alpha \bigl(p_1 -2p_2 +p_3\bigr),
\end{equation}
where $\{\beta_j\}_{j=1}^3$ are nonnegative constants satisfying 
$\beta_1+\beta_2+\beta_3=1$, and  $\alpha$ is an underdetermined 
positive constant or function.

Some specific examples from this family are  
\begin{align}\label{example1}
\hF_1(p_1,p_2,p_3,x)&:= F\Bigl(\frac{p_1 +p_2 +p_3}{3},x\Bigr) 
+\alpha \bigl(p_1 -2p_2 +p_3\bigr), \\
\hF_2(p_1,p_2,p_3,x)&:= F(p_2,x) +\alpha \bigl(p_1 -2p_2 +p_3\bigr), \label{example2}\\
\hF_3(p_1,p_2,p_3,x)&:= F\Bigl(\frac{p_1 +2p_2 +p_3}{4},x\Bigr) 
+\alpha \bigl(p_1 -2p_2 +p_3\bigr). \label{example3}
\end{align}

\begin{remark}
The term $\alpha \bigl(p_1 -2p_2 +p_3\bigr)$ is called a {\em numerical moment} 
due to the fact 
\[
	\delx^2 U_{j-1} - 2\delx^2 U_{j} + \delx^2 U_{j+1}
	= h^2 \, \frac{U_{j-2} - 4 U_{j-1} + 6 U_j - 4 U_{j+1} + U_{j+2}}{h^4},
\]
a central difference approximation of $u_{x x x x}(x_j)$ scaled by $h^2$.
\end{remark}

\subsection{Finite difference methods without explicit numerical moments}
\label{sec-4.2}

Given $p_1,p_2, p_3\in \mathbf{R}$, let $I(p_1,p_2, p_3)$ denote the smallest 
interval that contains $p_1,p_2$ and $p_3$, that is,
\[
I(p_1,p_2, p_3):= \bigl[ \min\{p_1,p_2, p_3\}, \max\{p_1,p_2, p_3\} \bigr].
\]
Our first method in this family is the following Godunov type scheme 
(cf. \cite{Shu07} and the references therein). Its numerical 
operator $\hF_4$ is defined by 
\begin{equation}\label{example4}
\hF_4(p_1,p_2,p_3,x):= \underset{p\in I(p_1,p_2, p_3)}{\ext}\, F(p,x), 
\end{equation}
where
\begin{equation}\label{example4a}
\underset{p\in I(p_1,p_2, p_3)}{\ext}
:= \begin{cases}
\underset{p\in I(p_1,p_2, p_3)}{\mbox{min}}  
&\qquad\mbox{if } p_2\geq \max\{p_1,p_3\}, \\ 
\underset{p\in I(p_1,p_2, p_3)}{\mbox{max}}  
&\qquad\mbox{if } p_2\leq \min\{p_1,p_3\},\\
\underset{ p_1 \leq p\leq p_2}{\mbox{min}}  &\qquad\mbox{if } p_1<p_2<p_3, \\ 
\underset{ p_3 \leq p\leq p_2}{\mbox{min}}  &\qquad\mbox{if } p_3<p_2<p_1.
\end{cases} 
\end{equation}

Our second method in this family is a slight modification of the 
previous scheme, and its numerical operator, $\hF_5$, is defined by
\begin{align}\label{example5}
\hF_5(p_1,p_2,p_3,x):= \underset{p\in I(p_1,p_2, p_3)}{\extr}\, F(p,x), 
\end{align}
where
\begin{equation}\label{example5a}
\underset{p\in I(p_1,p_2, p_3)}{\extr}
:= \begin{cases}
\underset{p\in I(p_1,p_2, p_3)}{\mbox{min}}  
&\qquad\mbox{if } p_2\geq \max\{p_1,p_3\}, \\ 
\underset{p\in I(p_1,p_2, p_3)}{\mbox{max}}  
&\qquad\mbox{if } p_2\leq \min\{p_1,p_3\},\\
\underset{ p_2 \leq p\leq p_3}{\mbox{max}}  &\qquad\mbox{if } p_1<p_2<p_3, \\ 
\underset{ p_2 \leq p\leq p_1}{\mbox{max}}  &\qquad\mbox{if } p_3<p_2<p_1.
\end{cases} 
\end{equation}

It is not hard to check that both $\hF_4$ and $\hF_5$ are consistent 
and g-monotone numerical operators.


\subsection{Verification of consistency, g-monotonicity, admissibility 
and stability for scheme \eqref{example123}} \label{sec-4.3}

In this subsection we use the methods with numerical operator $\hF_\beta$ 
as examples to demonstrate all the steps for verifying the assumptions 
of the convergence theorem, Theorem \ref{con_thm}. As mentioned before, 
the consistency and g-monotonicity are easy to verify,  but 
the verification of the admissibility and stability are more involved.
For simplicity, we only consider the case that $F$ is differentiable
and there exists a positive constant $\gamma > 0$ such that
\begin{equation} \label{LF_C}
	0 > -1/\gamma \geq \frac{\p F}{\p p} \geq -\gamma.
\end{equation}
Recall that 
\[
\hF_\beta(p_1,p_2,p_3,x):= F\bigl(\beta_1 p_1 + \beta_2 p_2 +\beta_3 p_3,x\bigr) 
+\alpha \bigl(p_1 -2p_2 +p_3\bigr), \\
\]
where $\beta_1, \beta_2$ and $\beta_3$ are nonnegative constants such that
$\beta_1+\beta_2+\beta_3=1$.

Trivially, $\hF_\beta(p,p,p,x)=F(p,x)$. Hence, $\hF_\beta$ is a consistent 
numerical operator for each set of $\beta_1, \beta_2$ and $\beta_3$
(see Remark \ref{rem3.1} (c)). To verify the g-monotonicity, we compute
\[
\frac{\p \hF_\beta}{\p p_1}=\beta_1 \frac{\p F}{\p p} + \alpha, \qquad
\frac{\p \hF_\beta}{\p p_2}=\beta_2 \frac{\p F}{\p p} -2 \alpha, \qquad
\frac{\p \hF_\beta}{\p p_3}=\beta_3 \frac{\p F}{\p p} + \alpha.
\]
Then $\hF_\beta$ is g-monotone if
\[
\frac{\p \hF_\beta}{\p p_1}> 0,\qquad \frac{\p \hF_\beta}{\p p_2}< 0,\qquad
\frac{\p \hF_\beta}{\p p_3}> 0.
\]
On noting that $\frac{\p F}{\p p} \leq 0$, solving the above system of 
inequalities yields
\begin{equation}\label{e4,48}
\alpha > -\max\{\beta_1, \beta_3\}\, \frac{\p F}{\p p}.
\end{equation}
Thus, we have proved the following theorem.

\begin{theorem}\label{thm4.1}  
$\hF_\beta$ is g-monotone provided that
\begin{equation}\label{e4.49}
\alpha> \max\{\beta_1, \beta_3\}\,
\gamma 
\end{equation}
for $\gamma$ defined by \eqref{LF_C}.
\end{theorem}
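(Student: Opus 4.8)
The plan is to use the differentiability of $F$ (hence of $\hF_\beta$) to reduce g-monotonicity to sign conditions on the three first-order partial derivatives $\p\hF_\beta/\p p_1$, $\p\hF_\beta/\p p_2$, $\p\hF_\beta/\p p_3$ computed above, and then to convert those sign conditions into a single sufficient condition on $\alpha$ by invoking the uniform bound \eqref{LF_C}.

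First I would note that, by Definition \ref{def3.1}(ii), it suffices to show $\p\hF_\beta/\p p_1>0$, $\p\hF_\beta/\p p_3>0$, and $\p\hF_\beta/\p p_2<0$ for every admissible triple $(p_1,p_2,p_3)$ and every grid point $x_j$, with $\p F/\p p$ understood as evaluated at $(\beta_1 p_1+\beta_2 p_2+\beta_3 p_3,\,x_j)$. The middle inequality is immediate: since $\alpha>0$ and $\p F/\p p\le 0$ by ellipticity, $\p\hF_\beta/\p p_2=\beta_2\,\p F/\p p-2\alpha<0$, with no constraint on $\alpha$ beyond positivity.

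For the two outer inequalities I would use the lower bound $\p F/\p p\ge-\gamma$ from \eqref{LF_C}: this gives $-\beta_i\,\p F/\p p\le\beta_i\gamma\le\max\{\beta_1,\beta_3\}\gamma$ for $i=1,3$, so the hypothesis $\alpha>\max\{\beta_1,\beta_3\}\gamma$ yields $\alpha>-\beta_i\,\p F/\p p$, i.e. $\p\hF_\beta/\p p_i>0$, uniformly over all arguments. This is precisely the inequality \eqref{e4,48} derived above, and the theorem follows.

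The only point requiring genuine care is the uniformization step: a priori the conditions $\alpha>-\beta_i\,\p F/\p p$ depend pointwise on $x_j$ and on the internal argument $\beta_1 p_1+\beta_2 p_2+\beta_3 p_3$, and it is exactly the \emph{uniform} bound in \eqref{LF_C} that allows a single constant $\alpha$ to be chosen that works everywhere. It is also worth remarking that only the one-sided bound $\p F/\p p\ge-\gamma$ enters the g-monotonicity argument; the companion bound $\p F/\p p\le-1/\gamma$ plays no role here and will instead be needed for the admissibility and stability analysis that follows.
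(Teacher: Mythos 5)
Your proposal is correct and follows essentially the same route as the paper: compute $\p\hF_\beta/\p p_1$, $\p\hF_\beta/\p p_2$, $\p\hF_\beta/\p p_3$, use $\p F/\p p\le 0$ for the middle sign and the uniform lower bound $\p F/\p p\ge-\gamma$ from \eqref{LF_C} to turn \eqref{e4,48} into the single condition \eqref{e4.49}. Your closing remarks on uniformity and on the one-sided nature of the bound are accurate but not needed beyond what the paper already does.
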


Next, we verify the admissibility and stability of the schemes. To this end, 
we consider the mapping $\cM_\rho: U\to \tU$ defined by 
\begin{equation}\label{e4.50}
\delta_x^2\tU_j =\delta_x^2 U_j 
+ \rho \hF_\beta\bigl(\delta_x^2 U_{j-1},\delta_x^2 U_j,
\delta_x^2 U_{j+1}, x_j\bigr), \quad j=2,3,\cdots, J-1.
\end{equation}
Let $\bU:=( U_2,U_3,\cdots,U_{J-1} )^T$ and $\tbU:=( \tU_2,\tU_3, \cdots, \tU_{J-1} )^T$.
Then \eqref{e4.50} can be rewritten in vector form as 
\begin{equation}\label{e4.51}
A\tbU =A\bU + \rho \bG(\bU),
\end{equation}
where $A$ stands for the tridiagonal matrix corresponding to the 
difference operator $\delta_x^2 U_j$ and $\bG(\bU)=\bigl(G_2(\bU,x_2),G_3(\bU,x_3), 
\cdots,G_{J-1}(\bU,x_{J-1}) \bigr)^T$ with
\[
G_j(\bU,x_j) = \hF_\beta\bigl(\delta_x^2 U_{j-1},\delta_x^2 U_j,
\delta_x^2 U_{j+1}, x_j\bigr), \quad j=2,3,\cdots, J-1.
\]
$\cM_\rho$ is said to be {\em monotone} if $\tbU$ is increasing in each 
component of $\bU$.

\begin{proposition}\label{prop4.1}
Suppose that $\hF_\beta$ is g-monotone, that is, \eqref{e4.49} holds.  Then the 
mapping $\cM_\rho$ is monotone for sufficiently small $\rho>0$.  
\end{proposition}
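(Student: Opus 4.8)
The plan is to show $\cM_\rho$ is monotone by differentiating the update $\tU_k$ with respect to each component $U_\ell$ of $\bU$ and checking the resulting matrix has nonnegative entries. First I would rewrite the componentwise update more explicitly: from \eqref{e4.50}, $\delta_x^2 \tU_j = \delta_x^2 U_j + \rho\, G_j(\bU, x_j)$, so $\tbU = \bU + \rho A^{-1}\bG(\bU)$, and the key fact I would exploit is that $A^{-1}$ is entrywise nonnegative (it is the discrete Green's function for $-\delta_x^2$ with homogeneous Dirichlet data, whose entries $(A^{-1})_{k\ell}$ are $-$ times the standard Green's function, hence all of one sign; with the sign convention for $A$ here they are $\le 0$ in such a way that $-A^{-1} \ge 0$ entrywise). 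Thus monotonicity of $\cM_\rho$ reduces to a sign condition on $I + \rho A^{-1} D\bG(\bU)$, where $D\bG$ is the Jacobian of $\bG$.

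Next I would compute $D\bG$. Because $G_j$ depends on $\bU$ only through $\delta_x^2 U_{j-1}, \delta_x^2 U_j, \delta_x^2 U_{j+1}$, and each $\delta_x^2 U_i = (U_{i-1} - 2U_i + U_{i+1})/h^2$, the chain rule gives $\partial G_j/\partial U_\ell$ as a combination of $\partial \hF_\beta/\partial p_1, \partial \hF_\beta/\partial p_2, \partial \hF_\beta/\partial p_3$ times entries of $A$. In matrix form $D\bG = B A$ where $B$ is tridiagonal with rows built from the (nonnegative, by g-monotonicity) quantities $\partial\hF_\beta/\partial p_1$, $-\partial\hF_\beta/\partial p_2$, $\partial\hF_\beta/\partial p_3$. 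Hence $\rho A^{-1} D\bG = \rho A^{-1} B A$, and $I + \rho A^{-1} B A = A^{-1}(I + \rho B)A$. The crux then becomes: show that for sufficiently small $\rho>0$, the matrix $A^{-1}(I+\rho B)A$ — equivalently the map $\bU \mapsto \bU + \rho A^{-1}BA\,\bU$ composed appropriately — is entrywise nonnegative. Since conjugation by $A$ does not obviously preserve positivity, the cleaner route is to argue directly on the scalar partial derivatives $\partial \tU_k/\partial U_\ell$ rather than through this conjugation.

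Therefore the main line I would actually pursue is the direct one: differentiate \eqref{e4.50} in the form $A\,\partial_{U_\ell}\tbU = A\,e_\ell + \rho\, D\bG\, e_\ell$, i.e. $\partial_{U_\ell}\tbU = e_\ell + \rho A^{-1} D\bG\, e_\ell = e_\ell + \rho A^{-1} B A\, e_\ell$. Writing $A = -\frac{2}{h^2}I + (\text{off-diagonal part})$ and using $\partial \hF_\beta/\partial p_2 = \beta_2 \partial F/\partial p - 2\alpha < 0$ together with $\partial\hF_\beta/\partial p_1, \partial\hF_\beta/\partial p_3 > 0$ (which is precisely g-monotonicity, \eqref{e4.49}), I would show that for $\rho$ small enough the diagonal entries of $I + \rho BA$ stay positive and dominate, while the structure of $A^{-1} \ge 0$ (up to sign convention) forces all entries of $\partial_{U_\ell}\tbU$ to be $\ge 0$. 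A convenient packaging: choose $\rho$ so small that $\delta_x^2\tU_j$, as a function of $U_{j-1}, U_j, U_{j+1}$ alone through the bracketed expression, is monotone in the "right" way (increasing in $U_{j-1}$ and $U_{j+1}$ via the $+\alpha$ and $+\rho\partial\hF/\partial p_k$ contributions, and the net coefficient of $U_j$ controlled since the large negative $-2/h^2$ from $\delta_x^2 U_j$ is only perturbed by $\rho$), then invoke the discrete maximum principle for $A$ to pass from monotonicity of $\delta_x^2\tU$ in $\bU$ to monotonicity of $\tbU$ itself.

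The hard part will be the bookkeeping at the boundary rows $j=2$ and $j=J-1$, where $\delta_x^2 U_{j-1}$ and $\delta_x^2 U_{j+1}$ involve the fixed boundary values $U_1 = u_a$, $U_J = u_b$ and the ghost values, so the truncated matrix $A$ and the Jacobian $D\bG$ pick up boundary corrections; one must check these do not destroy the entrywise sign. I expect the cleanest argument avoids $A^{-1}$ entirely: show that $\cM_\rho$ is monotone iff the scalar map $\delta_x^2 U_j \mapsto \delta_x^2 U_j + \rho \hF_\beta(\cdots)$ is monotone in each argument $\delta_x^2 U_i$ (which holds for small $\rho$ by g-monotonicity, since the Jacobian $I + \rho D_p\hF_\beta$ has positive diagonal and off-diagonals of the correct sign for small $\rho$), and that the discrete Laplacian $A$ being an M-matrix (inverse-positive) transfers this monotonicity to $\bU \mapsto \tbU$. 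Quantitatively $\rho < \min\{1/(2\alpha + \beta_2\gamma),\ \dots\}$-type bounds suffice; I would not grind these out but would state that such a threshold exists.
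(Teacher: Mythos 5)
Your plan is, in substance, the paper's own proof: the paper factors $\cM_\rho=\cM^{(3)}\circ\cM^{(2)}_\rho\circ\cM^{(1)}$ through the intermediate variables $W_j=\delta_x^2U_j$, checks that the middle map $W\mapsto W+\rho\hF_\beta(W_{j-1},W_j,W_{j+1},x_j)$ has Jacobian $I+\rho B$ with nonnegative entries for small $\rho$ (off-diagonals $\rho\,\p\hF_\beta/\p p_1,\;\rho\,\p\hF_\beta/\p p_3>0$ by g-monotonicity, diagonal $1+\rho\,\p\hF_\beta/\p p_2>0$ once $\rho<[2\alpha+\beta_2/\gamma]^{-1}$ in the paper's normalization), and then asserts that $\cM^{(1)}$ and $\cM^{(3)}$ are monotone because $A$ is definite. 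Your Jacobian bookkeeping $D\bG=BA$ and $\p\tbU/\p\bU=I+\rho A^{-1}BA=A^{-1}(I+\rho B)A$ is correct and is exactly the matrix form of that decomposition, and your threshold $\rho<1/(2\alpha+\beta_2\gamma)$ is the right kind of bound.

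The one step you flag but do not close is, however, the actual crux, and it is not resolved by invoking inverse-positivity of $A$. Entrywise nonnegativity of $I+\rho B$ does not imply entrywise nonnegativity of the conjugate $A^{-1}(I+\rho B)A$: the map $\cM^{(1)}\colon U\mapsto AU$ is \emph{not} order-preserving (the row $(1,-2,1)/h^2$ has mixed signs), so "monotone composed with monotone" does not apply, and your proposed "iff'' reduction to monotonicity of the scalar map in the $\delta_x^2U_i$ variables is precisely the unproven transfer. Writing $I+\rho A^{-1}BA$ and trying to use $A^{-1}\le 0$ (or $\ge 0$, depending on sign convention) entrywise still leaves you with the product $A^{-1}\cdot B\cdot A$ in which $B$ and $A$ both have mixed-sign rows, so no sign conclusion falls out for free; some additional structure (e.g., a discrete maximum-principle argument applied to $A(\tU-\tU')$ for ordered data, or a direct estimate on the pentadiagonal entries of $BA$) is genuinely needed. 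To be fair, the paper's own proof dispatches this in one sentence ("since $A$ is positive definite, so is $A^{-1}$, thus $\cM^{(1)}$ and $\cM^{(3)}$ are monotone"), so your attempt reproduces the published argument together with its weakest link --- you deserve credit for noticing the conjugation problem, but your proposal as written does not supply the missing argument either; it only relocates it.
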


\begin{proof}
Consider the following system
\begin{align}\label{e4.52}
W_j &=\delta_x^2 U_j, \quad j=2,3,\cdots, J-1, \\
\tW_j &=W_j +\rho \hF_\beta(W_{j-1}, W_j, W_{j+1}, x_j), 
\quad j=2,3,\cdots, J-1, \label{e4.53}\\
\delta_x^2\tU_j &= \tW_j, \quad j=2,3,\cdots, J-1. \label{e4.54}
\end{align}
Let $\cM^{(1)}: U \to W$, $\cM^{(2)}_\rho: W \to \tW$,
and $\cM^{(3)}: \tW\to \tU$. Then, it is easy to verify that
$\cM_\rho$ can be written as a composition operator of $\cM^{(1)}, \cM^{(2)}$
and $\cM^{(3)}$, that is, $\cM_\rho := \cM^{(3)} \circ \cM^{(2)}_\rho 
\circ \cM^{(1)}$.

Since $A$ is positive definite, so is $A^{-1}$. Thus, both $\cM^{(1)}$ and
$\cM^{(3)}$ are monotone in the sense that they preserve the natural 
ordering of $\ell^\infty(\cT_h)$.  Moreover, since
\[
\frac{\p \tW_j}{\p W_{j-1}} = \rho \frac{\p \hF_\beta}{\p p_1}, \qquad
\frac{\p \tW_j}{\p W_j} = 1+ \rho \frac{\p \hF_\beta}{\p p_2}, \qquad
\frac{\p \tW_j}{\p W_{j+1}} = \rho \frac{\p \hF_\beta}{\p p_3}, 
\]
then the g-monotonicity of $\hF_\beta$ implies that
\[
\frac{\p \tW_j}{\p W_{j-1}}> 0,\qquad \frac{\p \tW_j}{\p W_{j+1}}> 0,
\qquad\mbox{and}\qquad \frac{\p \tW_j}{\p W_j}> 0
\]
provided that 
\begin{equation}\label{e4.55}
0< \rho < \left[2\alpha + \beta_2/\gamma \right]^{-1}.
\end{equation}
Thus, $\cM^{(2)}_\rho$ is monotone,  so is
$\cM_\rho := \cM^{(3)} \circ \cM^{(2)}_\rho \circ \cM^{(1)}$,
provided that $\rho$ satisfies \eqref{e4.55}.  The proof is complete. 
\end{proof}

\begin{theorem}\label{thm4.2}
Under the assumptions of Proposition \ref{prop4.1}, the finite difference scheme
\eqref{fdm} with $\hF=\hF_\beta$ is admissible and stable. 
\end{theorem}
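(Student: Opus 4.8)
The plan is to establish admissibility and stability together via a non-standard fixed point argument built on the monotone mapping $\cM_\rho$ from Proposition \ref{prop4.1}. First I would observe that a grid function $U$ with $U_1 = u_a$, $U_J = u_b$ solves \eqref{fdm} with $\hF = \hF_\beta$ if and only if $\bU$ is a fixed point of $\cM_\rho$ (for any admissible $\rho > 0$ satisfying \eqref{e4.55}), since $A$ is invertible and $\cM_\rho \bU = \bU$ forces $\bG(\bU) = 0$. So the task reduces to showing $\cM_\rho$ has a fixed point in a bounded, $h$-independent region of $\mathbb{R}^{J-2}$, and that \emph{every} solution lies in such a region.

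The key step is to construct an ordered pair of sub- and super-solutions for the scheme, i.e. grid functions $\uu^h \leq \ou^h$ (with the correct boundary values $u_a, u_b$) such that $\bG(\uu^h) \geq 0 \geq \bG(\ou^h)$ componentwise, with $\|\uu^h\|_{\ell^\infty}, \|\ou^h\|_{\ell^\infty}$ bounded independently of $h$. The natural candidates are quadratic grid functions $w^h_j = u_a + \frac{x_j - a}{b-a}(u_b - u_a) \pm \frac{M}{2}(x_j-a)(b-x_j)$ for a large constant $M > 0$: for these, $\delta_x^2 w^h_j = \mp M$ is constant, so the numerical moment term vanishes and $G_j = F(\mp M, x_j)$; by the coercivity hypothesis \eqref{LF_C}, $\p F/\p p \leq -1/\gamma < 0$, so $F(\mp M, x_j) \to \pm\infty$ as $M \to \infty$, giving the required sign for $M$ large (uniformly in $j$ and $h$). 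Then, because $\cM_\rho$ is monotone and order-preserving and maps $[\uu^h, \ou^h]$ into itself (here one uses $\cM_\rho \uu^h \geq \uu^h$ from $\bG(\uu^h)\geq 0$ together with $\rho>0$, and monotonicity to propagate the order), a discrete Perron/monotone-iteration argument — or directly Brouwer applied to the continuous map $\cM_\rho$ on the compact box $[\uu^h, \ou^h]$ — yields a fixed point $\bU^h$ with $\uu^h \leq \bU^h \leq \ou^h$, establishing both admissibility and the stability bound $\|U^h\|_{\ell^\infty} \leq C$ with $C = \max(\|\uu^h\|_\infty, \|\ou^h\|_\infty)$, which is $h$-independent.

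To upgrade this to \emph{every} solution being stable (so that \eqref{fdm} is a stable scheme, not merely that a stable solution exists), I would argue that any solution $U^h$ satisfies $\uu^h \leq U^h \leq \ou^h$: if, say, $\max_j (U^h_j - \ou^h_j) > 0$ were attained at an interior index $j^\ast$, then at $j^\ast$ one has $\delta_x^2 U^h_{j^\ast} \leq \delta_x^2 \ou^h_{j^\ast}$ while the first and third slot arguments of $\hF_\beta$ are ordered the other way is not automatic — so instead I would use the monotonicity of $\cM_\rho$ applied to the inequality $\cM_\rho U^h = U^h$ versus $\cM_\rho \ou^h \leq \ou^h$: subtracting and iterating, $U^h - \ou^h \leq \cM_\rho^n(U^h) - \cM_\rho^n(\ou^h)$ is controlled, and a discrete maximum-principle / contraction-in-a-weighted-norm argument (the iteration matrix of $\cM^{(2)}_\rho$ is substochastic off the boundary) forces $U^h \leq \ou^h$; symmetrically $U^h \geq \uu^h$.

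The main obstacle I expect is precisely this last point: translating the pointwise monotonicity of $\cM_\rho$ (which acts on the \emph{second differences} $\delta_x^2 U_j$, not on $U_j$ directly, via the composition $\cM^{(3)}\circ\cM^{(2)}_\rho\circ\cM^{(1)}$) into a genuine comparison principle at the level of $U$ itself, and in particular handling the boundary coupling through $A^{-1}$ and the ghost values $U_0, U_{J+1}$ correctly so that the $h$-independence of the bound is preserved. The interplay between the global operator $A^{-1}$ (which is a full matrix) and the local monotone map $\cM^{(2)}_\rho$ is the delicate ingredient, and the role of $\rho$ small enough (condition \eqref{e4.55}) is exactly what makes all three factors simultaneously order-preserving so the comparison argument closes.
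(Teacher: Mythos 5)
Your route is genuinely different from the paper's: the paper proves stability first by observing that $\bG(\bU+\lambda)=\bG(\bU)$ for constants $\lambda$, so $\cM_\rho$ commutes with translation by constants, which together with the monotonicity from Proposition \ref{prop4.1} makes $\cM_\rho$ nonexpansive in $\ell^\infty(\cT_h)$ by Crandall--Tartar \cite{Crandall_Tartar79} and yields \eqref{A3} with $C=\max\{|u_a|,|u_b|\}$; it then proves admissibility (and uniqueness) by showing $\cM_\rho$ is a \emph{strict contraction} via the mean value theorem, estimate \eqref{e4.59}, under the additional hypotheses $(\beta_1+\beta_3)<1/\gamma^2$ and a suitable $\rho$. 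Your barrier construction is an attractive alternative for the existence half: on the quadratic grid functions the numerical moment vanishes identically, $G_j=F(\mp M,x_j)$, and the coercivity in \eqref{LF_C} supplies the sign for $M$ large uniformly in $h$; combined with monotonicity of $\cM_\rho$ and Brouwer on the invariant box this gives admissibility \emph{without} the extra restriction $(\beta_1+\beta_3)<1/\gamma^2$. (You do need to track the entrywise sign of $A^{-1}$ to pass from a sign condition on $\bG(\uu^h)$ to $\cM_\rho\uu^h\geq\uu^h$; the matrix of $\delta_x^2$ is negative definite, so the inequalities come with the opposite orientation from the one you wrote, but that is bookkeeping.)

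The genuine gap is in your claim that \emph{every} solution lies in $[\uu^h,\ou^h]$, which is what you need for the scheme (not just one solution) to be stable. From $U=\cM_\rho U$, $\cM_\rho\ou^h\leq\ou^h$, and order-preservation alone you cannot conclude $U\leq\ou^h$: monotone iteration started from $\ou^h$ converges downward to the maximal fixed point \emph{below} $\ou^h$, but it says nothing about fixed points not comparable to $\ou^h$, and your attempted comparison at an interior maximum of $U-\ou^h$ fails for exactly the reason you note (the first and third slots of $\hF_\beta$ need not be ordered). Your fallback --- ``a discrete maximum-principle / contraction-in-a-weighted-norm argument forces $U^h\leq\ou^h$'' --- is naming the missing ingredient rather than supplying it; to close the argument you need either a discrete comparison principle for the scheme or uniqueness of the fixed point. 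The paper obtains both missing pieces cheaply: the Crandall--Tartar nonexpansiveness gives the $h$-independent bound for \emph{all} solutions without any barrier, and the contraction estimate \eqref{e4.59} gives uniqueness. If you keep the barrier existence proof, you should at least import the translation-invariance/nonexpansiveness observation to finish the stability claim.
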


\begin{proof}
By the definition of $\bG(\bU)$, we immediately have $\bG(\bU +\lambda)
=\bG(\bU)$ for any constant $\lambda$. Hence, $\cM_\rho(\bU +\lambda)
=\cM_\rho(\bU) +\lambda$, and we have $\cM_\rho$ commutes with the addition 
of constants.  Together with the monotonicity of $\cM_\rho$, it follows that
$\cM_\rho$ is nonexpansive in $\ell^\infty(\cT_h)$ 
(see \cite{Crandall_Tartar79}). Hence \eqref{A3} holds with 
$C=\max\{ |u_a|, |u_b|\}$, and we have the scheme is stable.

To prove admissibility of the scheme, let 
\begin{equation}\label{e4.56}
\delta_x^2\tV_j =\delta_x^2 V_j 
+ \rho \hF_\beta\bigl(\delta_x^2 V_{j-1},\delta_x^2 V_j,
\delta_x^2 V_{j+1}, x_j\bigr), \quad j=2,3,\cdots, J-1.
\end{equation}
Subtracting \eqref{e4.56} from \eqref{e4.50} and using the
mean value theorem we get
\begin{align}\label{e4.58}
\delta_x^2(\tU_j-\tV_j) &= \Bigl[1+ \rho \frac{\p \hF_\beta}{\p p_2} \Bigr]\,
\delta_x^2(U_j-V_j)+\rho\,\frac{\p \hF_\beta}{\p p_1}\,\delta_x^2(U_{j-1}-V_{j-1})
\\
&\hskip 0.4in
+\rho\,\frac{\p \hF_\beta}{\p p_3}\, \delta_x^2(U_{j+1}-V_{j+1}).\nonumber
\end{align}
Hence,
\begin{equation}\label{e4.59}
\|\tbU-\tbV\|_{\ell^\infty} 
	\leq \big( 1 + \rho \left[ (\beta_1 + \beta_3)\gamma - 1/\gamma \right] \big) 
		\|\bU-\bV\|_{\ell^\infty} 
	\leq \frac{1}{2} \|\bU-\bV\|_{\ell^\infty},
\end{equation}
which holds for
$(\beta_1 + \beta_3) < 1 / \gamma^2$ and 
$\rho \geq \frac{1}{2} \big[1/\gamma - (\beta_1 + \beta_3) \gamma \big]^{-1}$.
Thus, \eqref{e4.59} implies that the mapping 
$\cM_\rho$ is contractive. By the fixed point theorem we conclude
that $\cM_\rho$ has a unique fixed point $U$, which in turn is the 
unique solution to the finite difference scheme \eqref{fdm}
with $\hF=\hF_\beta$. The proof is complete. 
\end{proof}

\begin{remark} \label{f2_stable}
We note that the choice $\beta_1 = \beta_3 = 0$ and $\beta_2 = 1$ 
trivially satisfies all of the restrictions in the proofs for any $\alpha > 0$.
We also note that the role of the numerical moment will be further 
explored numerically for degenerate elliptic test problems in section~\ref{sec-5}
\end{remark}


\section{Numerical Experiments}\label{sec-5}

In this section, we perform a series of numerical tests to demonstrate 
the accuracy and the order of convergence for the various proposed numerical schemes.  
As before, we assume a uniform mesh.  We use the Matlab built-in nonlinear 
solver {\em fsolve} for all tests, and, unless otherwise stated, we 
fix the initial guess $U^{(0)}$ as the linear interpolant of the boundary data.  
Also, all errors are measured in the $L^\infty$ norm.

For most tests, we record the results using $\widehat{F}_1$ and $\widehat{F}_4$.
Unless otherwise stated, the results for all of the proposed Lax-Friedrichs-like operators 
are analogous and the results for all of the proposed Godunov-like operators are analogous, 
even though the analysis that prompts Remark~\ref{f2_stable} suggests $\widehat{F}_2$ 
could be considered preferable to $\widehat{F}_1$ and $\widehat{F}_3$. 
For most of the examples we observe quadratic rates of convergence to the viscosity solution 
for the Lax-Friedrichs-like schemes.
For both classes of numerical operators we observe the lack of numerical artifacts that are 
known to plague the standard FD discretization for fully nonlinear problems.
However, for the Godunov-like schemes, this phenomena typically presents itself through the fact that 
the nonlinear solver {\em fsolve} fails to find a root.
Thus, while both classes of schemes support the selectivity of the discretizations, the 
resulting nonlinear algebraic system appears to be better suited for {\em fsolve} when 
using the Lax-Friedrichs-like operators.

We begin with a simple power nonlinearity that has a $C^\infty$ solution.


\medskip
{\bf Example 1:} Consider the problem
\begin{align*}
	- u_{x x}^3 + x^3 & =  0,	\qquad  -1 < x < 1, \\ 
u(-1)   = -1/6, \quad  u(1) & = 1/6,                   
\end{align*}
with the exact solution $u (x) = \frac{x^3}{6}$.

Using the linear interpolant of the boundary data as our initial guess and 
approximating $u$ with the various schemes above, we obtain the computed
results of Table \ref{table1} and Figure \ref{Fig1}.

\begin{table}[hbt]
\begin{center}
\begin{tabular}{| c | c | c | c | c |}
		\hline
	& \multicolumn{2}{|c|}{$\widehat{F}_1$ , $\alpha = 1.5$} &
		\multicolumn{2}{|c|}{$\widehat{F}_4$} \\ 
		\cline{2-5}
	$h$ & $L^\infty$ error & order & $L^\infty$ error & order \\ 
		\hline \cline{1-5}
	1.0000e-01 & 2.71e-02 & & 6.40e-02 & \\ 
		\hline
	5.0000e-02 & 5.10e-03 & 2.41 & 6.40e-02 & 0.00 \\ 
		\hline
	2.5000e-02 & 1.03e-03 & 2.31 & 6.40e-02 & 0.00 \\ 
		\hline
	1.2500e-02 & 2.33e-04 & 2.14 & 1.07e-03 & 5.90 \\ 
		\hline
	6.2500e-03 & 5.58e-05 & 2.06 & 2.12e-02 & -4.31 \\ 
		\hline
\end{tabular}
\caption{Rates of convergence of Example 1.} \label{table1}
\end{center}
\end{table}

\begin{figure}[htb]
\centerline{
\includegraphics[scale=0.34]{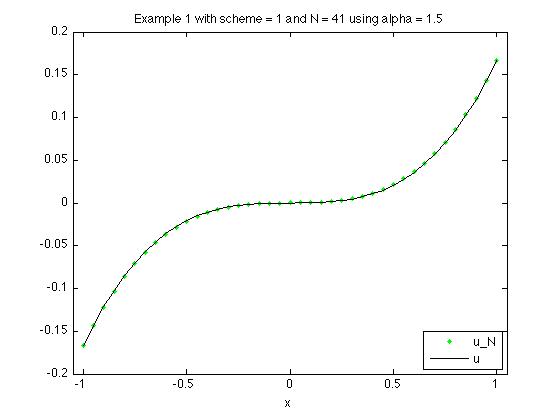}
\includegraphics[scale=0.34]{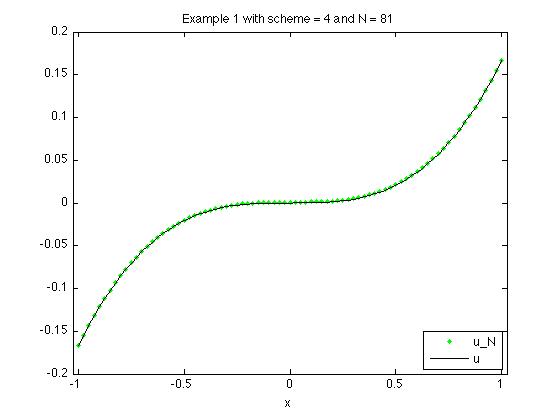}
}
\caption{Computed solutions of Example 1} \label{Fig1}
\end{figure}

The schemes $\widehat{F}_2$ and $\widehat{F}_3$ exhibit similar behavior 
as $\widehat{F}_1$, and $\widehat{F}_5$ exhibits similar behavior 
as $\widehat{F}_4$.  Thus, the Lax-Friedrichs-like schemes do exhibit 
a quadratic order of convergence as expected. On the other hand, 
the Godunov-like schemes converge inconsistently.  
This inconsistency is mostly due to {\em fsolve} failing to find a root.

If we fix our initial guess as the approximation computed by $\widehat{F}_1$ 
with $\alpha = 1.5$ and $h = 0.1$, we get the results of Table \ref{table2}.

\begin{table}[htb]
\begin{center}
\begin{tabular}{| c | c | c | c | c |}
		\hline
	& \multicolumn{2}{|c|}{$\widehat{F}_1$ , $\alpha = 1.5$} &
		\multicolumn{2}{|c|}{$\widehat{F}_4$} \\ 
		\cline{2-5}
	$h$ & $L^\infty$ error & order & $L^\infty$ error & order \\ 
		\hline \cline{1-5}
	1.0000e-01 & 2.71e-02 & & 8.24e-08 & \\ 
		\hline
	5.0000e-02 & 5.10e-03 & 2.41 & 1.58e-06 & -4.26 \\ 
		\hline
	2.5000e-02 & 1.03e-03 & 2.31 & 1.60e-05 & -3.34 \\ 
		\hline
	1.2500e-02 & 2.33e-04 & 2.14 & 9.06e-05 & -2.51 \\ 
		\hline
	6.2500e-03 & 5.58e-05 & 2.06 & 1.42e-02 & -7.29 \\ 
		\hline
\end{tabular}
\end{center}
\caption{Rates of convergence of Example 1.} \label{table2}
\end{table}

Thus, Godunov-like schemes converge with high levels of accuracy when the 
nonlinear solver has a sufficiently good initial guess.  Since the Godunov-like 
schemes are very sensitive towards the initial guess for {\em fsolve}, it is hard 
to characterize a rate of convergence. We also observe in Table \ref{table1} that 
the error for $h = 0.1, 0.05, 0.025, 0.00625$ is consistent with the 
error of the initial guess for the Godunov-like schemes. In contrast, the 
Lax-Friedrichs-like schemes converge for a much wider range of initial guesses.

\medskip
The next example concerns the 1-D Monge-Ampere equation.


\medskip
{\bf Example 2:} Consider the problem
\begin{align*}
	- u_{x x}^2 + 1 & =  0,	\qquad 0 < x < 1, \\ 
	u(0)   = 0,\quad  u(1) & = 1/2 .                      
\end{align*}
This problem has exactly two solutions
\begin{align*}
	u^+ (x) = \frac{1}{2} x^2, \qquad u^- (x) = - \frac{1}{2} x^2 + x ,
\end{align*}
where $u^+$ is convex and $u^-$ is concave.  However, $u^+$ is the unique 
viscosity solution that preserves the ellipticity of the operator.

Using $U^{(0)}$ as the linear interpolant of the boundary data, 
the computed results with both types of schemes are given in Table \ref{table3}.

\begin{table}[htb]
\begin{center}
\begin{tabular}{| c | c | c | c | c | c | c |}
		\hline
	& \multicolumn{2}{|c|}{$\widehat{F}_{1}$ , $\alpha = 1$} &
	\multicolumn{2}{|c|}{$\widehat{F}_{1}$ , $\alpha = -1$} &
	\multicolumn{2}{|c|}{$\widehat{F}_{4}$} \\ 
		\cline{2-7}
$h$ & $L^\infty$ error & order & $L^\infty$ error & order & $L^\infty$ error & order \\ 
		\hline \cline{1-7}
	1.000e-01 & 2.54e-03 & & 2.54e-03 & & 1.17e-01 & \\ 
		\hline
	5.000e-02 & 6.36e-04 & 2.00 & 6.36e-04 & 2.00 & 1.21e-01 & -0.05 \\ 
		\hline
	2.500e-02 & 1.59e-04 & 2.00 & 1.59e-04 & 2.00 & 1.24e-01 & -0.04 \\ 
		\hline
\end{tabular} 
\end{center}
\caption{Rates of convergence of Example 2.} \label{table3}
\end{table}

\begin{figure}[htb]
\centerline{
\includegraphics[scale=0.35]{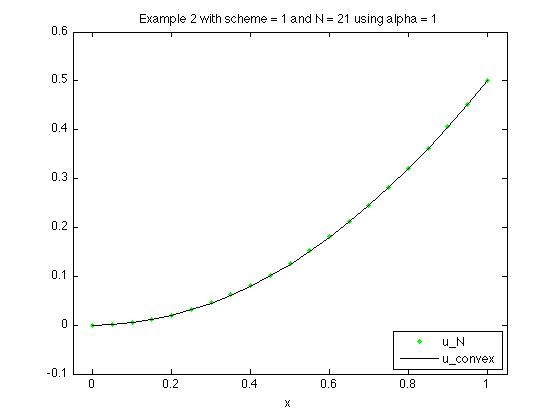}
\includegraphics[scale=0.35]{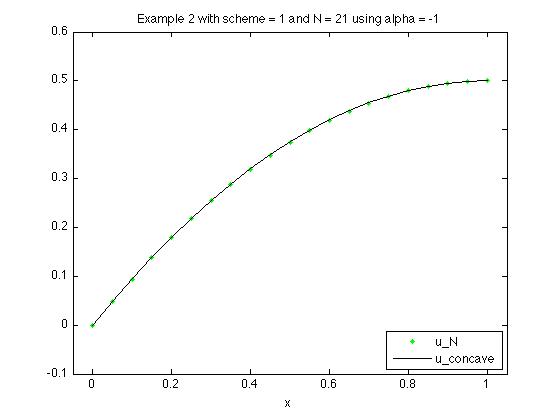}
} 
\begin{center}
\includegraphics[scale=0.38]{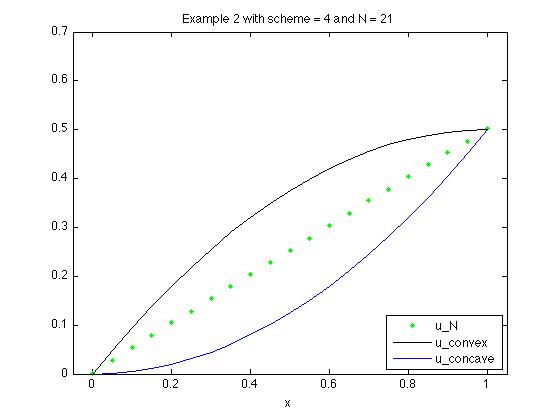}
\end{center}
\caption{Computed solutions of Example 2 with different parameter $\alpha$.} 
\label{fig2}
\end{figure}

We note that the Lax-Friedrichs-like schemes converge to the unique 
ellipticity preserving solution (i.e., convex solution)
for $\alpha > 0$ sufficiently large. However, if $\alpha < 0$ 
with $\left| \alpha \right|$ sufficiently large, 
the Lax-Friedrichs-like schemes converge to $u^-$.  The convergence 
to $u^-$ for $\alpha < 0$ is expected since $u^-$ is the unique solution 
that preserves the ellipticity of the PDE $u_{x x}^2 - 1 = 0$.  Forming the 
corresponding Lax-Friedrichs-like scheme and multiplying by $-1$ is equivalent 
to letting $\alpha < 0$ in the above formulation. 

We also test the benefit of using a Lax-Friedrichs-like scheme as 
opposed to the standard $3$-point finite difference method. We approximate $u$ 
using $\widehat{F}_2$ for varying values of $\alpha$, using the linear 
interpolant of the boundary data as our initial guess. The 
computed results are given in Table \ref{table4}.

\begin{table}[htb]
\begin{center}
\begin{tabular}{| c | c | c | c | c | c | c |}
		\hline
	& \multicolumn{2}{|c|}{$\widehat{F}_{2}$ , $\alpha = 6$} &
	\multicolumn{2}{|c|}{$\widehat{F}_{2}$ , $\alpha = 0.05$} &
		\multicolumn{2}{|c|}{$\widehat{F}_{2}$ , $\alpha = 0$} \\ 
		\cline{2-7}
$h$ & $L^\infty$ error & order & $L^\infty$ error & order & $L^\infty$ error & order \\ 
		\hline \cline{1-7}
	1.000e-01 & 3.07e-02 & & 1.18e-01 & & 9.00e-02 & \\ 
		\hline
	5.000e-02 & 8.51e-03 & 1.85 & 3.31e-02 & 1.83 & 1.15e-01 & -0.35 \\ 
		\hline
	2.500e-02 & 2.14e-03 & 1.99 & 3.03e-02 & 0.13 & 1.15e-01 & -0.00 \\ 
		\hline
\end{tabular}
\end{center}
\caption{Performance of a Lax-Friedrichs-like scheme with various $\alpha$}
\label{table4}
\end{table}

\begin{figure}{hb}
\centerline{
\includegraphics[scale=0.34]{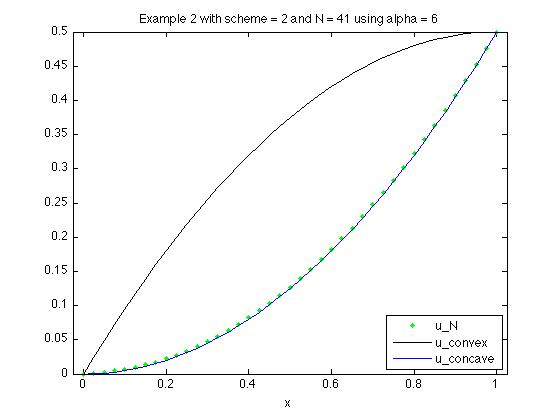}
\includegraphics[scale=0.34]{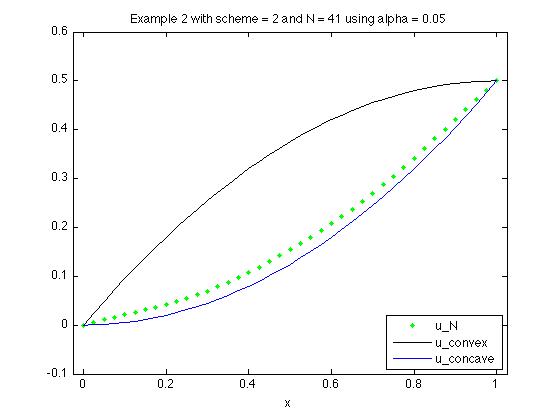}
}
\begin{center}
\includegraphics[scale=0.36]{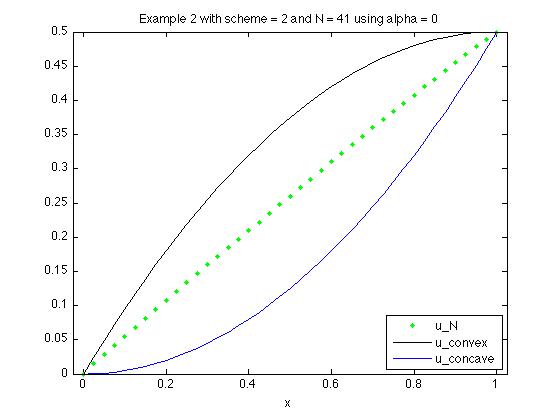}
\end{center}
\caption{Computed solutions by a Lax-Friedrichs-like scheme with various $\alpha$}
\label{Fig3}
\end{figure}
 
We remark that letting $\alpha = 0$ corresponds to the standard $3$-point
finite difference method, which does not converge in the above example.  
Instead, it behaves similarly to the Godunov-like schemes in that the nonlinear 
solver cannot determine a good direction to move from the initial guess.
Thus, the Lax-Friedrichs-like schemes have a mechanism for giving the 
nonlinear solver a good direction towards finding a root.  
When $\alpha$ is sufficiently large, the schemes converge.
When $\alpha$ is not sufficiently large, while the schemes may not converge, 
they have a tendency to move towards the correct solution.  
Furthermore, we can see that the Lax-Friedrichs-like schemes converge 
quadratically for $\alpha$ bigger than the theoretical lower bound with 
only a small cost in the level of accuracy. Thus, when dealing with a problem 
that has an unknown optimal bound for $\alpha$, large $\alpha$ values can be used.  
A shooting method for decreasing $\alpha$ allows the scheme to gain accuracy 
while maintaining the benefits of the Lax-Friedrichs-like schemes.

If we first use $\widehat{F}_1$ with $\alpha = 1$ to approximate $u$ on a 
coarse mesh with $h = 0.1$, and then we interpolate the result to get an initial 
guess for the two proposed schemes and the $3$-point finite difference method, 
we get the results of Table \ref{table5}.
Thus, we see that the Godunov-like schemes and the standard
finite difference formulation now converge to $u^+$ with high levels of
accuracy given a sufficiently good initial guess.  In
fact, they both converge to the same limit.

\begin{table}[bt]
\begin{center}
\begin{tabular}{| c | c | c | c | c | c | c |}
		\hline
	& \multicolumn{2}{|c|}{$\widehat{F}_{1}$ , $\alpha = 1$} &
	\multicolumn{2}{|c|}{$\widehat{F}_{4}$} &
		\multicolumn{2}{|c|}{$\widehat{F}_{2}$ , $\alpha = 0$} \\ 
		\cline{2-7}
$h$ & $L^\infty$ error & order & $L^\infty$ error & order & $L^\infty$ error & order \\ 
		\hline \cline{1-7}
	1.000e-01 & 2.54e-03 & & 9.96e-15 & & 9.96e-15 & \\ 
		\hline
	5.000e-02 & 6.36e-04 & 2.00 & 4.54e-13 & -5.51 & 4.54e-13 & -5.51 \\ 
		\hline
	2.500e-02 & 1.59e-04 & 2.00 & 1.46e-10 & -8.33 & 1.46e-10 & -8.33 \\ 
		\hline
	1.250e-02 & 3.97e-05 & 2.00 & 9.85e-10 & -2.75 & 9.85e-10 & -2.75 \\ 
		\hline
\end{tabular}
\end{center}
\caption{Performance of the standard $3$-point scheme}\label{table5}
\end{table}

\begin{figure}[htb]
\centerline{
\includegraphics[scale=0.34]{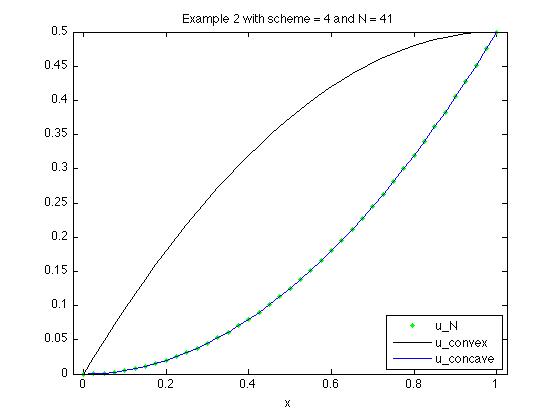}
\includegraphics[scale=0.34]{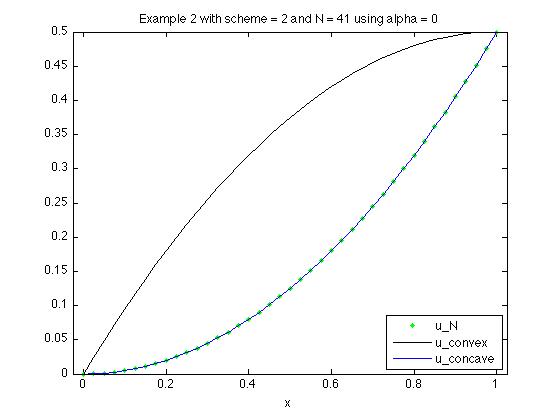}
}
\caption{Computed solutions by a Godunov-like scheme and the standard $3$-point scheme}
\label{Fig4}
\end{figure}

To the contrary, if we use $\widehat{F}_1$ with $\alpha = -1$ to approximate
$u$ on a coarse mesh with $h = 0.1$ and then interpolate the result as an 
initial guess, we obtain the results of Table \ref{table6}.
Clearly, none of the schemes converge to $u^+$. Moreover, the Lax-Friedrichs-like
schemes and the Godunov-like schemes do not converge to $u^-$ even if $U^{0}$
is close to $u^-$.  Instead, {\em fsolve} finds no solution 
when using the two proposed schemes. Thus, the Lax-Friedrichs-like 
schemes and the Godunov-like schemes appear to only consider $u^+$ to be the 
solution of the PDE. Since $u^+$ is the unique viscosity solution
of the PDE, lack of convergence to $u^-$ for
the Lax-Friedrichs-like schemes for $\alpha > 0$ sufficiently
large and for the Godunov-like schemes is consistent with theory.

In contrast, the standard $3$-point finite difference method does converge to $u^-$.
When given a sufficiently good guess, the $3$-point finite difference method
will converge to any one of the two solutions. Furthermore, the discretization can
create artificial solutions that will attract the standard $3$-point finite difference
method. On the other hand, the monotonicity of our proposed schemes
prevent the discretizations from having multiple solutions.

\begin{table}[htb]
\begin{center}
\begin{tabular}{| c | c | c | c | c | c | c |}
		\hline
	& \multicolumn{2}{|c|}{$\widehat{F}_{1}$ , $\alpha = -1$} &
	\multicolumn{2}{|c|}{$\widehat{F}_{4}$} &
		\multicolumn{2}{|c|}{$\widehat{F}_{2}$ , $\alpha = 0$} \\ 
		\cline{2-7}
$h$ & $L^\infty$ error & order & $L^\infty$ error & order & $L^\infty$ error & order \\ 
		\hline \cline{1-7}
	1.000e-01 & 2.68e-02 & & 2.56e-03 & & 2.24e-14 & \\ 
		\hline
	5.000e-02 & 5.61e-03 & 2.25 & 2.54e-03 & 0.01 & 8.82e-13 & -5.30 \\ 
		\hline
	2.500e-02 & 1.26e-02 & -1.16 & 2.54e-03 & 0.00 & 8.83e-12 & -3.32 \\ 
		\hline
	1.250e-02 & 1.41e-02 & -0.17 & 2.54e-03 & -0.00 & 1.63e-09 & -7.53 \\ 
		\hline
\end{tabular}
\end{center}
\caption{Performance of a Lax-Friedrichs-like scheme with $\alpha = -1$. }
\label{table6}
\end{table}

\begin{figure}[htb]
\centerline{
\includegraphics[scale=0.17]{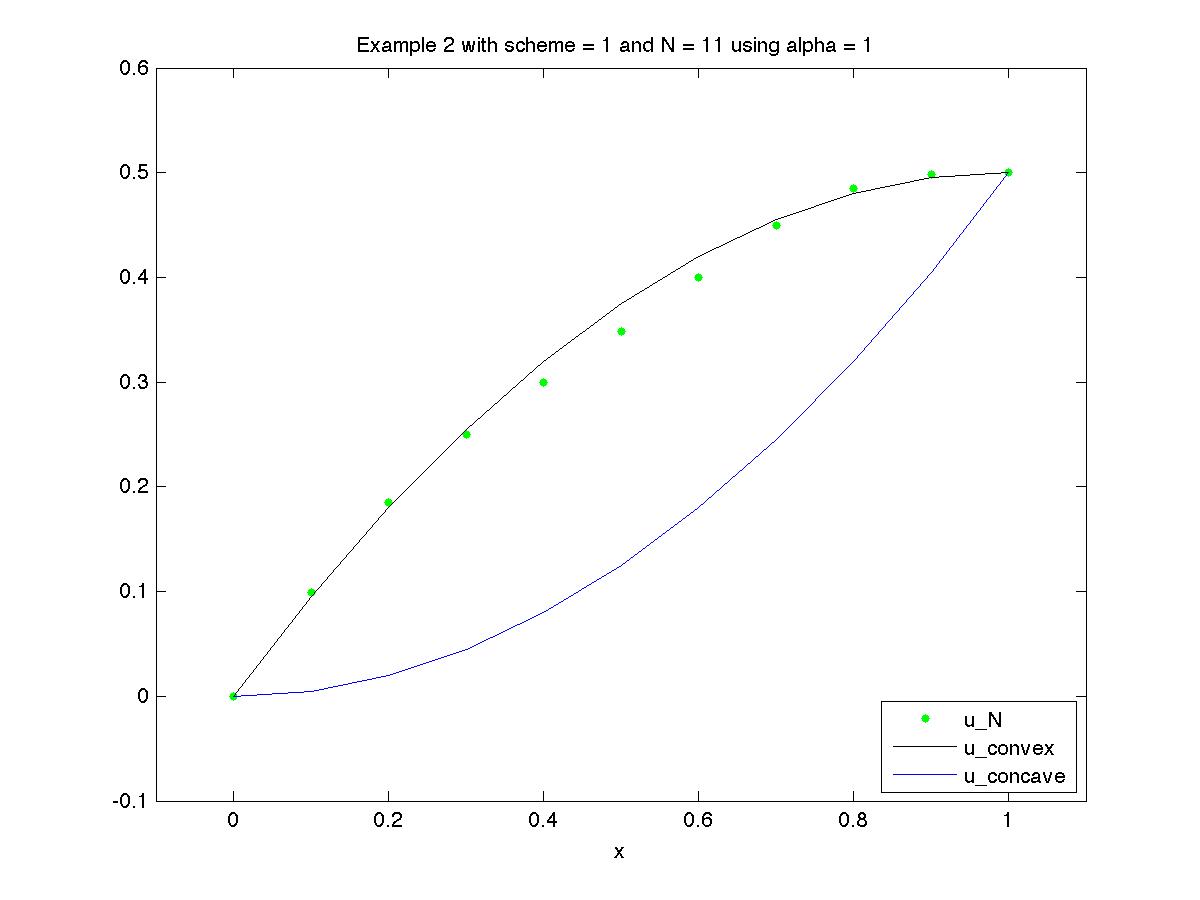}
\includegraphics[scale=0.34]{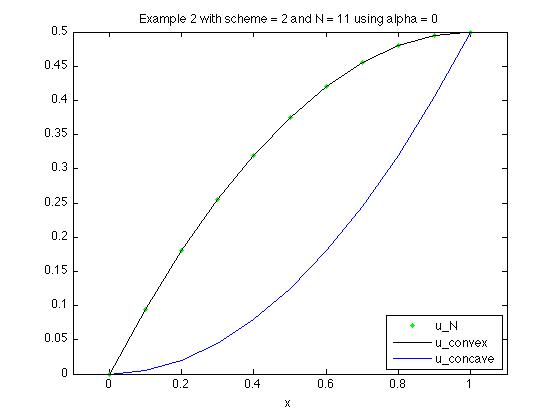}
}
\caption{Computed solutions of a Lax-Friedrichs-like scheme with $\alpha = -1$. }
\label{Fig5}
\end{figure}

The next two examples deals with Bellman type equations.


{\bf Example 3:} Consider the problem
\begin{align*}
\min_{\theta(x) \in \left\{ 1, 2 \right\}} \bigl\{- A_\theta u_{x x} 
- S \left( x \right)\bigr\} & =  0, \qquad -1 < x < 1, \\ 
	u(-1)   = -1, \quad u(1) & = 1.                      
\end{align*}
for
\begin{align*}
	A_1  = 1,\quad
	A_2  = 2,\quad
	S (x)  = 
		\begin{cases}
			12 x^2, & \text{if } x < 0, \\
			-24 x^2, & \text{if } x \geq 0.
		\end{cases} 
\end{align*}
This problem has the exact solution $u (x) = x \left| x \right|^3$.
We also note that this problem has a finite dimensional control parameter set.

Using the linear interpolant as the initial guess, we obtain the results
of Table \ref{table7}. We observe that the Godunov-like scheme converges
and both schemes exhibit quadratic convergence for this example.  

\begin{table}[htb]
\begin{center}
\begin{tabular}{| c | c | c | c | c |}
		\hline
	& \multicolumn{2}{|c|}{$\widehat{F}_{1}$ , $\alpha = 1$} &
	\multicolumn{2}{|c|}{$\widehat{F}_{4}$} \\ 
		\cline{2-5}
	$h$ & $L^\infty$ error & order & $L^\infty$ error & order \\ 
		\hline \cline{1-5}
	1.000e-01 & 1.29e-01 & & 9.60e-03 & \\ 
		\hline
	5.000e-02 & 4.67e-02 & 1.46 & 2.50e-03 & 1.94 \\ 
		\hline
	2.500e-02 & 1.46e-02 & 1.68 & 6.25e-04 & 2.00 \\ 
		\hline
	1.250e-02 & 4.18e-03 & 1.80 & 4.70e-01 & -9.55 \\ 
		\hline
	6.250e-03 & 1.13e-03 & 1.89 & 4.72e-01 & -0.01 \\ 
		\hline
	3.125e-03 & 2.95e-04 & 1.93 & 4.72e-01 & -0.00 \\ 
		\hline
\end{tabular}
\end{center}
\caption{Rates of convergence of Example 3.}\label{table7}
\end{table}

\begin{figure}[hbt]
\centerline{
\includegraphics[scale=0.34]{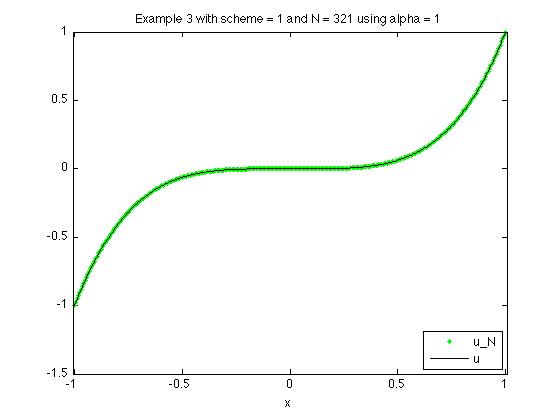}
\includegraphics[scale=0.34]{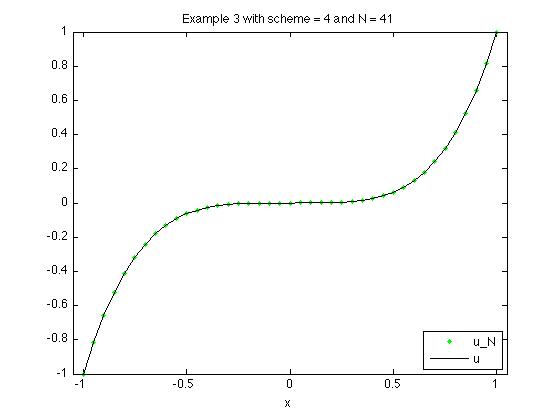}
} 
\caption{Computed solutions of Example 3.}\label{Fig6}
\end{figure}

\medskip
Now we consider a Bellman problem with infinite dimensional control parameter set.


\medskip
{\bf Example 4:}
Let $\theta : \mathbb{R} \to \mathbb{R}$ such that $\theta \in L^\infty ([ 2, 4])$, 
and consider the problem
\begin{align*}
\inf_{ -1 \leq \theta(x) \leq 1} \bigl\{- \theta \, u_{x x} + \theta^2 \, u + x^{-2} 
\bigr\} & =  0,	\qquad 2 < x < 4, \\ 
	u(2)   = 4,\quad  u(4) & = 16 .                      
\end{align*}
This problem has the exact solution $u (x) = x^2$ with the corresponding 
control $\theta (x) = x^{-2}$.

Let the initial guess be given by the linear interpolant of the boundary data.  
Then, we obtain the results of Table \ref{table8}.

\begin{table}[htb]
\begin{center}
\begin{tabular}{| c | c | c | c | c |}
		\hline
	& \multicolumn{2}{|c|}{$\widehat{F}_{1}$ , $\alpha = 0.5$} &
	\multicolumn{2}{|c|}{$\widehat{F}_{4}$} \\ 
		\cline{2-5}
	$h$ & $L^\infty$ error & order & $L^\infty$ error & order \\ 
		\hline \cline{1-5}
	1.000e-01 & 3.07e-01 & & 5.59e-01 & \\ 
		\hline
	5.000e-02 & 9.88e-02 & 1.64 & 4.96e-01 & 0.17 \\ 
		\hline
	2.500e-02 & 3.09e-02 & 1.68 & 5.10e+00 & -3.36 \\ 
		\hline
\end{tabular}
\end{center}
\caption{Rates of convergence of Example 4.}\label{table8}
\end{table}
Both schemes have a hard time finding a root for $h$ small, although the 
Lax-Friedrichs-like schemes do converge towards $u$ for larger values of $h$.

Now we choose the initial guess 
\[
	U^{(0)} = \frac{3}{14} x^3 + \frac{16}{7} , 
\]
a simple cubic function that satisfies the boundary conditions.
Then, $\|U^{(0)}$ $-u \|_{L^\infty ( [2, 4] )} \approx 0.94$, 
and we get the results of Table \ref{table9}.

\begin{table}[htb]
\begin{center}
\begin{tabular}{| c | c | c | c | c |}
		\hline
	& \multicolumn{2}{|c|}{$\widehat{F}_{1}$ , $\alpha = 0.5$} &
	\multicolumn{2}{|c|}{$\widehat{F}_{4}$} \\ 
		\cline{2-5}
	$h$ & $L^\infty$ error & order & $L^\infty$ error & order \\ 
		\hline \cline{1-5}
	1.000e-01 & 3.07e-01 & & 6.74e-10 & \\ 
		\hline
	5.000e-02 & 9.88e-02 & 1.64 & 7.04e-08 & -6.71 \\ 
		\hline
	2.500e-02 & 3.09e-02 & 1.68 & 3.41e-09 & 4.37 \\ 
		\hline
	1.250e-02 & 9.02e-03 & 1.78 & 8.09e-08 & -4.57 \\ 
		\hline
	6.250e-03 & 2.47e-03 & 1.87 & 9.44e-01 & -23.48 \\ 
		\hline
\end{tabular}
\end{center}
\caption{Rates of convergence of Example 4.}\label{table9}
\end{table}

\begin{figure}[htb]
\centerline{
\includegraphics[scale=0.34]{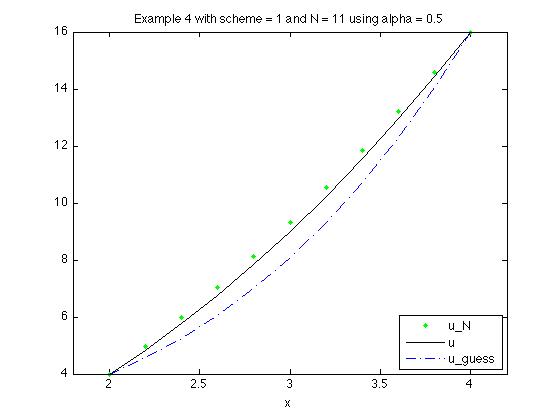}
\includegraphics[scale=0.34]{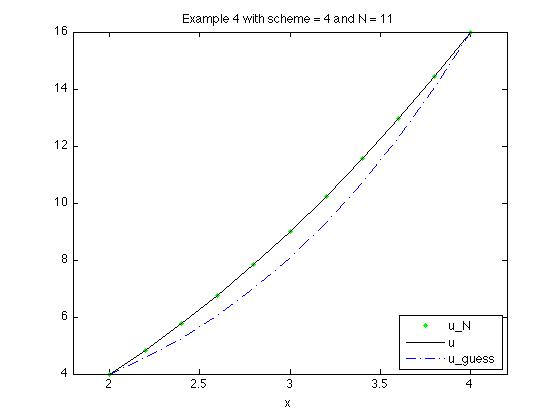}
} 
\caption{Computed solutions of Example 4.}\label{Fig7}
\end{figure}
Thus, the Lax-Friedrichs-like schemes again converge with a rate of almost $2$.
Also, the Godunov-like schemes converge with high levels of accuracy for 
$h \geq 0.0125$, but for smaller $h$, {\em fsolve} fails to find a root .

We remark that this problem can also be approximated by using a splitting algorithm.  
The operator can be split into an optimization problem for $\theta$ and a 
linear PDE problem for $u$, and then a natural scheme is to successively 
approximate $\theta$ and $u$ starting with an initial guess for $\theta$.  
For the above approximations, the nonlinearity due to the infimum was preserved 
inside the definition of the operator.  

\medskip
The final example considers a problem whose solution is not classical.
 

{\bf Example 5:} Consider the problem
\begin{align*}
	- u_{x x}^{3} + 8 \text{ sign}(x) & =  0,	\qquad -1 < x < 1, \\ 
          u(-1)  = -1, \quad  u(1) & = 1 ,                 
\end{align*}
with the exact solution $u (x) = x |x| \in C^1 ( [-1,1] )$. 

Using the linear interpolant of the boundary data as the initial guess, 
we obtain the results of Table \ref{table10}.
 
\begin{table}[htb]
\begin{center}
\begin{tabular}{| c | c | c | c | c |}
		\hline
	& \multicolumn{2}{|c|}{$\widehat{F}_{1}$ , $\alpha = 1.5$} &
	\multicolumn{2}{|c|}{$\widehat{F}_{4}$} \\ 
		\cline{2-5}
	$h$ & $L^\infty$ error & order & $L^\infty$ error & order \\ 
		\hline \cline{1-5}
	1.000e-01 & 1.59e-02 & & 2.40e-01 & \\ 
		\hline
	5.000e-02 & 3.76e-03 & 2.08 & 2.50e-01 & -0.06 \\ 
		\hline
	2.500e-02 & 9.40e-04 & 2.00 & 2.50e-01 & 0.00 \\ 
		\hline
	1.250e-02 & 2.35e-04 & 2.00 & 6.69e-06 & 15.19 \\ 
		\hline
	6.250e-03 & 5.88e-05 & 2.00 & 2.05e-01 & -14.90 \\ 
		\hline
\end{tabular}
\end{center}
\caption{Rates of convergence of Example 5.}\label{table10}
\end{table}

\begin{figure}[htb]
\centerline{
\includegraphics[scale=0.34]{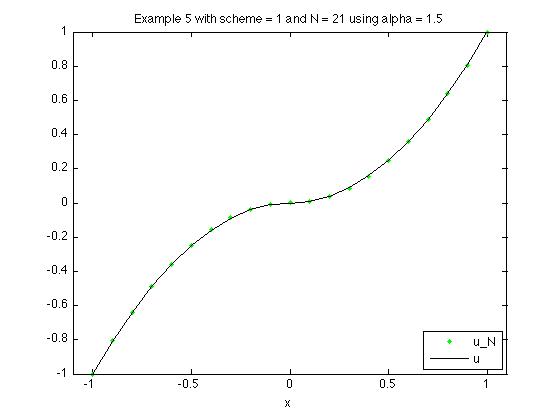}
\includegraphics[scale=0.34]{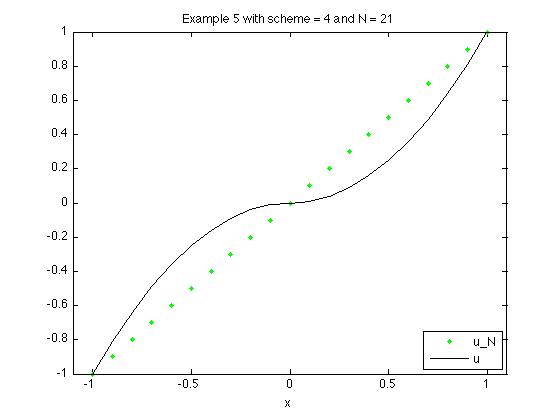}
}
\centerline{
\includegraphics[scale=0.34]{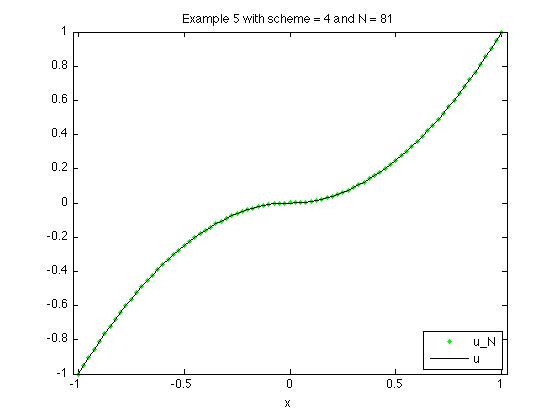}
\includegraphics[scale=0.34]{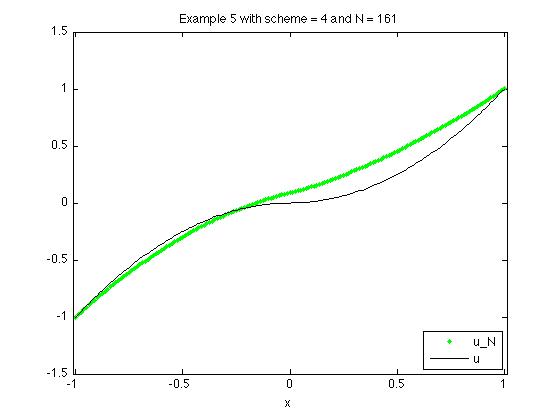}
}
\caption{Computed solutions of Example 4.}\label{Fig8}
\end{figure}
We clearly see the quadratic rate of convergence for the Lax-Friedrichs-like 
schemes.  The Godunov-like schemes only converge for $h = 0.0125$.  For 
larger $h$, the scheme returns the initial guess after failing to find a root.  
For the test with smaller $h$, the scheme returns a slightly improved 
approximation after reaching the maximum number of iterations.

If we fix our initial guess as the approximation formed by $\widehat{F}_1$ 
with $\alpha = 1.5$ and $h = 0.1$, we then get the results of Table \ref{table11}.

\begin{table}[htb]
\begin{center}
\begin{tabular}{| c | c | c | c | c |}
		\hline
	& \multicolumn{2}{|c|}{$\widehat{F}_{1}$ , $\alpha = 1.5$} &
	\multicolumn{2}{|c|}{$\widehat{F}_{4}$} \\ 
		\cline{2-5}
	$h$ & $L^\infty$ error & order & $L^\infty$ error & order \\ 
		\hline \cline{1-5}
	1.000e-01 & 1.59e-02 & & 1.84e-08 & \\ 
		\hline
	5.000e-02 & 3.76e-03 & 2.08 & 4.05e-06 & -7.78 \\ 
		\hline
	2.500e-02 & 9.40e-04 & 2.00 & 8.85e-06 & -1.13 \\ 
		\hline
	1.250e-02 & 2.35e-04 & 2.00 & 6.50e-06 & 0.45 \\ 
		\hline
	6.250e-03 & 5.88e-05 & 2.00 & 7.78e-06 & -0.26 \\ 
		\hline
\end{tabular}
\end{center}
\caption{Rates of convergence of Example 5.}\label{table11}
\end{table}
As observed in the previous examples, we see that the Godunov-like schemes 
converge quickly with high levels of accuracy, thus making it difficult to 
characterize a general rate of convergence.  

%
%
%

\section{Conclusion}\label{sec-6}
We have presented a new framework for constructing and analyzing 
consistent, g-monotone, and stable finite difference methods.  The 
newly proposed consistency and g-monotonicity criterion are not only 
simple to understand, but they are also easy to verify in practice. The key 
concept of the framework is the ``numerical operator", which plays
the same role as the ``numerical Hamiltonian" does in the successful
monotone finite difference framework for first order fully
nonlinear Hamilton-Jacobi equations. To construct practically 
useful finite difference methods which can be easily implemented
on computers, we have also presented a guideline for designing finite 
difference methods which fulfill the structure criterion of the 
proposed finite difference framework. The key concept in this 
regard is the ``numerical moment", which plays the same role 
as the ``numerical viscosity" does in the successful finite difference 
framework for first order fully nonlinear Hamilton-Jacobi equations.
Moreover, we gave some numerical evidences and argued that 
``numerical moments" provide an indispensable mechanism and
ability for a finite difference scheme to be able  
to converge to the viscosity solution of the underlying second
order fully nonlinear PDE problem. To a certain degree, 
the work of this paper bridges the 
gap between the state-of-the-art of finite difference 
methods for second order fully nonlinear PDEs and that 
for first order fully nonlinear Hamilton-Jacobi equations.
Although the results of this paper are confined to the
one spatial dimension case, they are also expected to hold 
in high spatial dimensions; that result will be presented 
in a forthcoming companion paper \cite{Feng_Kao_Lewis11}.

\bibliographystyle{elsarticle-num}

\end{document}